\newtheorem{theorem}{Theorem}[section]
\newtheorem{lemma}[theorem]{Lemma}
\newtheorem{proposition}[theorem]{Proposition}
\newtheorem{corollary}[theorem]{Corollary}
\theoremstyle{definition}
\newtheorem{example}[theorem]{Example}
\theoremstyle{remark}
\newtheorem{remark}[theorem]{Remark}
\numberwithin{equation}{section}
\begin{document}

\title[On rates in Euler's formula for $C_0$-semigroups]
{On rates in Euler's formula for $C_0$-semigroups}

\author{Alexander Gomilko}
\address{Faculty of Mathematics and Computer Science\\
Nicolas Copernicus University\\
ul. Chopina 12/18\\
87-100 Toru\'n, Poland
}

\email{gomilko@mat.umk.pl}

\author{Yuri Tomilov}
\address{Faculty of Mathematics and Computer Science\\
Nicolas Copernicus University\\
ul. Chopina 12/18\\
87-100 Toru\'n, Poland\\
and
Institute of Mathematics\\
Polish Academy of Sciences\\
\'Sniadeckich 8\\
00-956 Warszawa, Poland
}

\email{tomilov@mat.umk.pl}
\dedicatory{To Fritz Gesztesy on the occasion of his sixtieth anniversary, with admiration}

\thanks{This work was completed with the support of the NCN grant
 DEC-2011/03/B/ST1/00407.}

\subjclass[2010]{Primary 47A60, 65J08, 47D03; Secondary 46N40, 65M12}

\keywords{Euler approximation, $C_0$-semigroup,  functional
calculus, convergence rate}

\date{\today}

\begin{abstract}
By functional calculus methods,
we obtain optimal convergence rates
in Euler's approximation formula for $C_0$-semigroups
restricted to ranges of
generalized Stieltjes functions.
Our results include a number of partial cases studied in the literature
 and cannot essentially be improved.
\end{abstract}

\maketitle

\section{Introduction}
Let $-A$ be the generator of a bounded $C_0$-semigroup  $(e^{-tA})_{t\ge 0}$ on a Banach space $X$.
Then the abstract Cauchy problem
\begin{equation} \label{cauchy}
\left\{ \begin{array}{ll}
x'(t) = - A x(t) , & \qquad t \ge 0 ,\\[2mm]
x(0) = x_0 , & \qquad x_0 \in X,
\end{array} \right.
\end{equation}
is well-posed and all its mild solutions are given by the formula
$$
x(t)=e^{-tA}x_0, \qquad t\ge 0, \quad x_0 \in X.
$$
However, even if $A$ is bounded, the exponential function  $e^{-tA}$ can hardly be given in an explicit form. Thus it is of importance
for applications to find approximation formulas for  $e^{-tA}$ amenable for the purposes of numerical analysis, e.g. formulas involving rational functions of $A.$
Starting from the pioneering works of Hersh and Kato \cite{HK} and
P. Brenner and V. Thom\'ee \cite{BT79}, the methods of
Hille-Phillips
functional calculus have played an important role in the theory of rational approximations of $C_0$-semigroups,
see e.g. \cite[Introduction and Chapter 1]{KovDis} for a survey.

In this paper, we extend further the functional calculus approach by replacing the ``conventional'' Hille-Phillips functional calculus by the extended
Hille-Phillips functional calculus and then restricting ourselves to the important part of the extended Hille-Phillips calculus given by generalized Stieltjes functions. This approach proved to be quite successful in dealing with rates in mean ergodic theorems for continuous and discrete operator semigroups, see \cite{GHT1}, \cite{GHT} and \cite{GT}.
To demonstrate the power of our approach we consider the simplest semigroup approximation known as Euler's exponential formula
or the Post-Widder inversion formula. The approximation arises when the abstract Cauchy problem \eqref{cauchy}
is time-discretized by the so-called  Euler backward method and it can be defined as
\[
E_{n,t}(A)x:=\left(1+\frac{t}{n}A\right)^{-n}x, \qquad x \in X, \quad n \in \mathbb N, \quad t>0.
\]
It is well known that for every $x \in  X,$
\begin{equation}\label{euler}
e^{-tA}x=\lim_{n \to \infty} E_{n,t}(A)x
\end{equation}
uniformly for $t$ in compact intervals of positive semi-axis.
Thus a natural question is whether it is possible to quantify the convergence in \eqref{euler}.
It is easy show that, in general, there is no rate of convergence in \eqref{euler}
uniform with respect to all elements $x \in X.$
However such a rate does exist when the elements are taken from the domain of an appropriate function of $A$, e.g. a power function.
The next theorem surveys  known results on the rates of convergence of Euler's formula in this case.
Denote \[
\Delta_{n,t}(A):=E_{n,t}(A)-e^{-tA}, \qquad n \in \mathbb N, \quad t>0.
\]
\begin{theorem}\label{survey}
Let $-A$ be the generator of a bounded $C_0$-semigroup  $(e^{-tA})_{t\ge 0}$ on a Banach space $X.$
\begin{itemize}
\item [(i)] \cite[Theorem 4]{BT79} There exists $c >0$ such that for any $n \in \mathbb N$ and $t>0,$
\begin{equation*}
\|\Delta_{n,t}(A)x\|\le c \left(\frac{t}{\sqrt{n}}\right)^2 \, \|A^2x\|,\qquad x\in {\rm dom} \,(A^2);
\end{equation*}
\item [(ii)] \cite[Theorem 1.7]{Fiory} There exists $c> 0$ such that for any $n \in \mathbb N$ and $t>0,$
\begin{equation*}
\|\Delta_{n,t}(A)x\|\le
c\frac{t}{\sqrt{n}}\, \|Ax\|,\qquad x\in {\rm dom}\,(A);
\end{equation*}
\item [(iii)]\cite[Corollary 4.4]{Kovacs07} There exists $c>0$ such that for any $n \in \mathbb N,$  $t>0$ and
$0< \alpha\le 2,$
\begin{equation}\label{inter}
\|\Delta_{n,t}(A)x\|\le c \left(\frac{t}{\sqrt{n}}\right)^{\alpha}
\|x\|_{\alpha,2,\infty},\;\;t\ge 0,\;\;n\in \mathbb N,
\;\;
x\in X_{\alpha,2,\infty},
\end{equation}
where the Banach space $X_{\alpha,2,\infty}$ (called a Favard space) is defined as
\[
X_{\alpha,2,\infty}:=\left\{x\in X:\, \|x\|_{\alpha,2,\infty}:=
\|x\|+\sup_{t>0}\,\frac{\|(e^{-tA}-I)^2x\|}{t^\alpha}<\infty\right\}.
\]
\end{itemize}
\end{theorem}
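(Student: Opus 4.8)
The plan is to derive all three estimates from one integral representation of $E_{n,t}(A)$ supplied by the Hille--Phillips calculus. Setting $s=t/n$ and using the Laplace-transform formula $(I+sA)^{-1}=\int_0^\infty e^{-u}e^{-suA}\,du$, together with the fact that the $n$-fold convolution of the exponential density is the Gamma density, one obtains
\[
E_{n,t}(A)=\frac{1}{(n-1)!}\int_0^\infty u^{n-1}e^{-u}e^{-(t/n)uA}\,du .
\]
I would read this probabilistically as $E_{n,t}(A)=\mathbb{E}\,e^{-t\bar X_n A}$, where $\bar X_n=\frac1n\sum_{k=1}^n X_k$ is the average of $n$ independent $\mathrm{Exp}(1)$ random variables, so that $\mathbb{E}\,\bar X_n=1$ and $\mathbb{E}(\bar X_n-1)^2=1/n$. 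Since $e^{-tA}=e^{-t\cdot 1\cdot A}$, this gives the uniform formula
\[
\Delta_{n,t}(A)x=\mathbb{E}\bigl(e^{-t\bar X_nA}x-e^{-tA}x\bigr),\qquad x\in X,
\]
on which everything rests; write $M=\sup_{r\ge0}\|e^{-rA}\|$.

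For (ii), with $x\in{\rm dom}(A)$ I use $e^{-\sigma A}x-e^{-tA}x=-\int_t^\sigma Ae^{-rA}x\,dr$, whence $\|e^{-\sigma A}x-e^{-tA}x\|\le M|\sigma-t|\,\|Ax\|$. Taking $\sigma=t\bar X_n$ and moving the norm inside the expectation yields $\|\Delta_{n,t}(A)x\|\le Mt\|Ax\|\,\mathbb{E}|\bar X_n-1|\le Mt\|Ax\|(\mathbb{E}(\bar X_n-1)^2)^{1/2}=Mt\|Ax\|/\sqrt n$, which is (ii). For (i), with $x\in{\rm dom}(A^2)$ I push the expansion one order further, $e^{-\sigma A}x-e^{-tA}x=-(\sigma-t)Ae^{-tA}x+\int_t^\sigma(\sigma-r)A^2e^{-rA}x\,dr$. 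The point is that the first-order term carries $\sigma-t=t(\bar X_n-1)$, whose expectation vanishes because $\mathbb{E}\,\bar X_n=1$, so only the quadratic remainder, bounded by $\tfrac12 M(\sigma-t)^2\|A^2x\|$, contributes. Integrating gives $\|\Delta_{n,t}(A)x\|\le\tfrac12 Mt^2\|A^2x\|\,\mathbb{E}(\bar X_n-1)^2=\tfrac12 M(t/\sqrt n)^2\|A^2x\|$, which is (i).

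Finally, (iii) I would obtain by real interpolation between the endpoint $\alpha=0$, given by the trivial uniform bound $\|\Delta_{n,t}(A)x\|\le(1+M)\|x\|$, and the endpoint $\alpha=2$. For the latter I would first recast (i) in terms of the Favard norm, proving $\|\Delta_{n,t}(A)x\|\le c(t/\sqrt n)^2\|x\|_{2,2,\infty}$; this is the step where the second-difference structure is essential, and I would establish it by feeding $(e^{-sA}-I)^2=e^{-2sA}-2e^{-sA}+I$ into the representation above instead of $A^2$. Since the Favard spaces form a real-interpolation scale, $X_{\alpha,2,\infty}=(X,X_{2,2,\infty})_{\alpha/2,\infty}$ with equivalent norms, the interpolation property of the real method, applied to the operators $\Delta_{n,t}(A)\colon X_{\alpha,2,\infty}\to X$, then produces the intermediate rate $(t/\sqrt n)^\alpha$ for all $0<\alpha\le2$. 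The main obstacle is precisely this last step: parts (i) and (ii) are essentially immediate once the probabilistic representation is in hand, whereas (iii) requires both the second-difference form of the $\alpha=2$ estimate and the identification of $X_{\alpha,2,\infty}$ as an interpolation space, after which the stated rate follows.
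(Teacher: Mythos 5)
First, note that the paper itself offers no proof of Theorem \ref{survey}: it is a survey statement whose three parts are attributed to \cite{BT79}, \cite{Fiory} and \cite{Kovacs07}, and the paper later \emph{re-derives} strengthened versions of all three via the extended Hille--Phillips calculus (parts (i), (ii) follow from Theorem \ref{th1} with $f(z)=z^{-2}$, $z^{-1}$, i.e.\ from the moment computations of Example \ref{Ex1}; part (iii) is Theorem \ref{ThmInt}, proved \emph{without} interpolation by optimizing the estimate for $g_\lambda(z)=(z/(\lambda+z))^2$ over $\lambda$). Your parts (i) and (ii) are correct and complete: the representation $E_{n,t}(A)=\mathbb{E}\,e^{-t\bar X_nA}$ is exactly \eqref{rn}, the moments $\mathbb{E}(\bar X_n-1)=0$ and $\mathbb{E}(\bar X_n-1)^2=1/n$ are \eqref{zero} and \eqref{int20}, and your Taylor-with-remainder plus Cauchy--Schwarz argument is a clean probabilistic rephrasing of the computations in Example \ref{Ex1} a), b). Only routine justification of interchanging the expectation with the expansion is missing.

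Part (iii), however, has a genuine gap at the endpoint. You propose to interpolate between $X$ and $X_{2,2,\infty}$, which requires first proving $\|\Delta_{n,t}(A)x\|\le c(t/\sqrt n)^2\|x\|_{2,2,\infty}$, and your only indication of how to do this is to ``feed $(e^{-sA}-I)^2$ into the representation instead of $A^2$.'' That substitution does not work as stated: the representation produces the \emph{first} difference $e^{-t\min(\bar X_n,1)A}\bigl(e^{-t|\bar X_n-1|A}-I\bigr)x$, and for $x\in X_{2,2,\infty}$ a first difference $\|(e^{-sA}-I)x\|$ is \emph{not} $O(s^2)$ (that would force $Ax=0$); the iteration $(e^{-sA}-I)=\tfrac12\bigl[(e^{-2sA}-I)-(e^{-sA}-I)^2\bigr]$ shows the second-difference seminorm only yields an $O(s)$ bound on first differences. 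A symmetrization $g(\bar X_n)+g(2-\bar X_n)-2g(1)$ would produce a genuine second difference, but fails because the Gamma distribution of $\bar X_n$ is not symmetric about its mean. The standard (and correct) route is the one you do not take: run the $K$-functional argument for the couple $(X,{\rm dom}(A^2))$, combining the trivial bound $\|\Delta_{n,t}(A)x\|\le 2M\|x\|$ with your part (i) to get $\|\Delta_{n,t}(A)x\|\le cK\bigl(x,(t/\sqrt n)^2;X,{\rm dom}(A^2)\bigr)$, and then invoke the Butzer--Berens/Komatsu identification $X_{\alpha,2,\infty}=(X,{\rm dom}(A^2))_{\alpha/2,\infty}$ for $0<\alpha\le 2$ (for $\alpha<2$ this is \cite[Theorem 4.3]{Komatsu}, cited in the paper's Introduction; for $\alpha=2$ it is the Favard-class statement). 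Once you accept that identification — which you already implicitly use in claiming the Favard spaces form an interpolation scale — the detour through the unproven $X_{2,2,\infty}$ endpoint is unnecessary. Alternatively, the paper's own Theorem \ref{ThmInt} shows how to get (iii) with no interpolation theory at all, by proving $\|\Delta_{n,t}(A)x\|\le 2M(1+\lambda t/\sqrt n)^2\|[A(A+\lambda)^{-1}]^2x\|$ and choosing $\lambda=\sqrt n/t$, together with the equivalence \eqref{norm} of the Komatsu and Favard norms.
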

Some comments concerning the last result are in order.
Note that by \cite[Theorem 4.3]{Komatsu} (see also \cite[Theorem 11.3.5]{Mart1}) if $\alpha\in (0,2)$
then $X_{\alpha,2,\infty}$ coincides
with  Komatsu's (Banach) space
 $D_\infty^\alpha=D_{\infty,2}^\alpha, $
\begin{equation}\label{norm}
D_\infty^\alpha:= \left\{x\in X:\,\|x\|_{D_\infty^\alpha}:=\|x\|+\sup_{\lambda>0}\,\lambda^\alpha
\|[A(A+\lambda)^{-1}]^2 x\|<\infty \right\},
\end{equation}
in the sense that $X_{\alpha,2,\infty}=D_\infty^\alpha$ as sets and the norms are equivalent.
On the other hand,
${\rm dom}\,(A^2)\subset X_{2,2,\infty}$
and, by
\cite[Proposition 2.8]{Komatsu},
$
{\rm dom}\,(A^\alpha)$ is embedded continuously in $D_\infty^\alpha,$ $\alpha\in (0,2).$ However, there are examples (see e.g.
\cite[p. 340]{Komatsu1966})
showing the the inclusion ${\rm dom}\,(A^\alpha) \subset D_\infty^\alpha$ is in general strict.

Thus, \eqref{inter} implies that  there exists $c>0$ such that for any $\alpha\in (0,2],$  $n\in \mathbb N$, and $t>0,$
\begin{equation}\label{kk2}
\|\Delta_{n,t}(A)x\|\le c\left(\frac{t}{\sqrt{n}}\right)^\alpha \|x\|_{{\rm dom}\,(A^\alpha)},\quad x\in {\rm dom}\,(A^\alpha),
\end{equation}
 where
 \[
\|x\|_{{\rm dom}\,(A^\alpha)}:=\|x\|+\|A^\alpha x\|, \qquad x \in {\rm dom}\, (A^\alpha).
\]
(Remark that \cite[Corollary 4.4]{Kovacs07}  only states  that
\eqref{inter} implies  \eqref{kk2} for $\alpha=1,2$.)

We should also emphasize that the results mentioned in Theorem \ref{survey} are in fact partial cases of more general statements on convergence rates for $A$-stable and stable rational approximations of $e^{-At}$ obtained in \cite{BT79}, \cite{Fiory} and \cite{Kovacs07}.
(For similar results see also \cite{Hassan}.) In this paper we consider a very particular case of Euler's approximation  but our results are more general and complete (see also a remark at the end of this section).
  The main problem addressed in this paper is the characterization of  decay rates for
$
\Delta_{n,t}(A)x, x\in {\rm ran}\,(f(A)),
$
where $f$ is a generalized Stieltjes function
of the class $\mathcal{S}_2$.

In particular, we extend Theorem \ref{survey} substantially by replacing power functions  with reciprocals of generalized Stieltjes functions. As a corollary, we are also able to improve Theorem \ref{survey} by showing that
there exists $c>0$ such that for any $n \in \mathbb N$, $t>0,$ and $\alpha\in (0,2],$
\[
\|\Delta_{n,t}(A)x\|\le c \left(\frac{t}{\sqrt{n}}\right)^\alpha \|A^\alpha x\|,\qquad x\in {\rm dom}\,(A^\alpha).
\]
 This result does not hold for $\alpha>2$  as it is explained in  Remark \ref{remarkonalpha}.
We also show that  \eqref{inter} is an easy
consequence of our main result and so it is possible to avoid  interpolation theory used in \cite{Kovacs07}.
Moreover, we prove that our estimates of convergence rates are optimal.

We believe that our method will be fruitful for more general rational approximations as well. However, being confined by space limits, we present only its sample which nevertheless is significant enough to be of value as for semigroup theory so for numerical analysis.

\section{Preliminaries and notations }
The following elementary integrals will be used frequently throughout the paper:

\begin{align}
\frac{1}{(n-1)!}\int_0^\infty s^{n-1} e^{-s}\,ds=&1,
\label{int1}
\\
\int_0^\infty s^{n-1} e^{-s} (1-s/n)\,ds=&0,
\label{zero}
\\
\frac{1}{(n-1)!}\int_0^\infty s^{n-1} e^{-s}(1-s/n)^2\,ds
=&\frac{1}{n},
\label{int20}
\end{align}
where $n\in \mathbb N$.

To simplify our presentation we introduce the next notation:
\begin{align*}
e_t(z):=&e^{-tz},\;\;
r(z):=\frac{1}{1+z},\;\;
r_{n,t}(z):=r^n(zt/n),\\
\Delta_{n,t}(z):=&r_{n,t}(z)-e_t(z),\;\;n\in\mathbb N,\;\;t>0,\;\; h:=\frac{t}{n}.
\end{align*}
Thus, in particular, by (\ref{int1}) we have for $z \in \mathbb C_+:$
\begin{equation}\label{rn}
(n-1)! \, r_{n,t}(z)=\frac{1}{h^n}\int_0^\infty s^{n-1}e^{-h^{-1}s} e^{-zs}\,ds
=\int_0^\infty s^{n-1}e^{-s} e^{-zst/n}\,ds.
\end{equation}

For a closed linear operator $A$ on a complex Banach space $X$ we
denote by ${\rm dom}\,(A),$ ${\rm ran}\,(A)$ and $\sigma(A)$ the
{\em domain}, the {\em range},  and the {\em
spectrum} of $A$, respectively, and let  $\overline{{\rm ran}}\,(A)$ stand for the norm-closure of the range.
 The space of bounded linear operators
on $X$ is denoted by ${\rm L}(X)$.
Finally, we let
\[
\mathbb C_{+}=\{z\in \mathbb C:\,{\rm Re}\,z>0\},\quad \mathbb R_+=[0,\infty).
\]

\section{Functional calculus}

In this subsection we recall definition and basic properties of
functional calculus useful for the sequel.

Let ${\rm M}(\mathbb R_+)$ be a Banach algebra of bounded Radon measures on $\mathbb R_+.$
Define the {\em Laplace transform} of  $\mu \in {\rm M}(\mathbb R_+)$  as
\[ (\mathcal L\mu)(z) := \int_0^{\infty}e^{-sz} \, \mu(d{s}),
\qquad  z \in \mathbb C_+.
\]
Note that the space
\[
{\rm A}^1_+(\mathbb C_+) := \{ \mathcal L\mu : \mu \in {\rm M}(\mathbb R_+)\}
\]
is a commutative Banach algebra with pointwise multiplication and with the
norm
\begin{equation}\label{mmm}
\|\mathcal L \mu\|_{{\rm A}^1_+(\mathbb C_+)} := \|\mu\|_{{\rm M}(\mathbb R_+)} = |{\mu}|(\mathbb R_+),
\end{equation}
where $|\mu|(\mathbb R_+)$ stands for the total variation of $\mu$ on $\mathbb R_+.$
Moreover, the Laplace transform
\[ \mathcal L : {\rm M}(\mathbb R_+) \mapsto {\rm A}^1_+(\mathbb C_+)
\]
is an isometric isomorphism.

Let $-A$ be the generator of a bounded $C_0$-semigroup
$(e^{-tA})_{t\ge 0}$ on a Banach space $X$. Then the mapping
\begin{eqnarray*}
{\rm A}^1_+(\mathbb C_+) &\mapsto& {\rm L}(X),\\
 H(\mathcal L {\mu})x &:= & \int_0^{\infty} e^{-sA}x\, \mu(d{s}), \quad x \in X,
\end{eqnarray*}
defines a continuous algebra homomorphism such that
\begin{equation}\label{hillestimate}
 \| H(\mathcal L {\mu})\| \le \sup_{t \ge 0} \|e^{-tA}\| |\mu|(\mathbb R_+).
\end{equation}
The homomorphism is called the {\em Hille-Phillips} (HP-) functional calculus
for $A$, and we set
\begin{equation*}
g(A)= H(\mathcal L {\mu}) \qquad \text{if} \quad g=\mathcal L \mu.
\end{equation*}
Basic properties of the Hille-Phillips functional calculus can be found in  \cite[Chapter XV]{HilPhi}.

The HP-calculus has an automatic extension to a  function
class much larger then ${\rm A}^1_+(\mathbb C_+)$. Let us recall how this extension is
constructed: if $f: \mathbb C_+ \to \mathbb C$ is holomorphic such that there exists
 $e\in {\rm A}^1_+(\mathbb C_+)$ with $ef \in {\rm A}^1_+(\mathbb C_+)$ and the
operator $e(A)$ is injective, then one defines
\begin{align*}\label{HP}
{\rm dom}\, (f(A)):=&\{x \in X :
(ef)(A)x \in {\rm ran}\,(e(A)) \}\\
f(A) :=& e(A)^{-1} \, (ef)(A).
\end{align*}
Such $f$ is called
{\em regularizable}, and $e$ is called a {\em regularizer} for $f$. This
 definition of $f(A)$ does not depend on the
choice of $e$ and $f(A)$ is a closed
 operator on $X$. The set of all
regularizable functions $f$ constitute an algebra ${\mathcal A}$ depending on $A$
(see e.g. \cite[p. 4-5]{Ha06} and \cite[p. 246-249]{deLau95}).
We call the mapping
\[
\mathcal A \ni f \mapsto  f(A)
\]
 the {\em extended Hille--Phillips calculus} for $A$.
The  next {\em
product rule} of the extended Hille-Phillips  calculus (see e.g. \cite[Chapter 1]{Ha06}) will be crucial for the sequel: {\em if $f$ is regularizable and $g\in {\rm A}^1_+(\mathbb C_+)$,
then}
\begin{equation}\label{hpfc.e.prod}
g(A)f(A)\subset f(A) g(A) = (fg)(A),
\end{equation}
where products of operators have their natural domains.
From \eqref{hpfc.e.prod} it
follows that
if $f$ is regularizable and $e$ is a regularizer, then
\begin{equation}\label{domian}
{\rm ran}\,(e(A))\subset {\rm dom}\,(f(A)).
\end{equation}

\section{Generalized Stieltjes functions}

Our considerations will rely on the notion of generalized Stieltjes function.
We say that a function
$f:(0,\infty)\mapsto [0,\infty)$ is {\it generalized Stieltjes} of order $\alpha>0$ if it can be written as
\[
f(z)=a+\int_0^\infty \frac{\mu(d\tau)}{(z+\tau)^\alpha},\qquad z>0,
\]
where $a\ge 0$ and $\mu$ is a positive Radon measure on $[0,\infty)$
satisfying
\[
\int_0^\infty\frac{\mu(d\tau)}{(1+\tau)^\alpha}<\infty.
\]
Observe that if $f$ is generalized Stieltjes (of any positive order), then $f$ admits an (unique) analytic
extension to $\mathbb C \setminus (-\infty,0]$
which will be identified with $f$ and denoted by the same symbol.
The class of generalized Stieltjes functions of order $\alpha$ will be denoted by $\mathcal{S}_\alpha.$
In this terminology, Stieltjes functions constitute precisely  the class $\mathcal{S}_1$ of generalized
Stieltjes functions of order $1,$
and we will write $\mathcal{S}$ in place of $\mathcal{S}_1$ to denote the class of Stieltjes functions
thus using an established notation.
Note that  $\mathcal{S}\subset \mathcal{S}_2,$ and moreover $\mathcal{S}\cdot \mathcal{S}\subset \mathcal{S}_2$.
Since for every $\alpha \in (0,2]$ one has $z^{-\alpha} \in \mathcal{S}_{\alpha}$ and $\mathcal{S}_\alpha\subset \mathcal{S}_2,$
it clearly follows that $z^{-\alpha} \in \mathcal{S}_2$ for every $\alpha \in (0,2].$
For these as well as many other properties of generalized Stieltjes functions see  \cite{Karp}. A very informative
discussion of Stieltjes functions is contained in \cite[Chapter 2]{SchilSonVon2010}.

We will also need a subclass  $\tilde{\mathcal{S}}_2$ of $\mathcal{S}_2$
consisting of products of Stieltjes functions:
\[
\tilde{\mathcal{S}}_2:=\{f=f_1\cdot f_2:\,\, f_1, f_2\in \mathcal{S}\}.
\]
Observe that the implication
$f_1,f_2\in \mathcal{S}$ $\Rightarrow$ $f_1^{1/2}\cdot f_2^{1/2} \in \mathcal{S}$ (see
\cite[Proposition 7.10]{SchilSonVon2010}) yields
\[
\tilde{\mathcal{S}}_2=\{f=f_0^2:\;\;f_0\in \mathcal{S}\}.
\]

We can {\it define} the class of complete Bernstein functions $\mathcal{CBF}$ as $\mathcal{CBF}:=\{zf : f \in \mathcal{S}\}.$
An important link between the classes of Stieltjes and complete Bernstein functions is provided by the fact that $f \in  \mathcal{CBF}, f \neq 0,$
if and only if $1/f \in \mathcal S,$ see e.g. \cite[Theorem 7.3]{SchilSonVon2010}.

Let now $-A$ be the generator of a bounded $C_0$-semigroup on a Banach space $X.$
By \cite[Lemma 2.5]{GHT}
any complete Bernstein function  is regularizable by $1/(1+z)\in  {\rm A}^1_+(\mathbb C_+).$
Thus if $A$ is injective  then
every $f \in \mathcal{S}$ is regularizable by
$z/(1+z)\in  {\rm A}^1_+(\mathbb C_+).$
The next proposition shows, in particular, that functions from $\mathcal{S}_2$ (and then from $\tilde{\mathcal{S}}_2$) are regularizable as well
and identifies cores of the corresponding operators. To deal with densely defined operators we assume below that the range of $A$ dense.
Note that under this condition, for every $x \in X,$
\begin{equation*}
\lambda (\lambda +A)^{-1}x \to 0, \qquad \lambda \to 0+,
\end{equation*}
and therefore $A$ is also injective (see e.g. \cite[p. 261]{ABHN}).
\begin{proposition}\label{regS}
Let  $-A$ be the generator of a bounded $C_0$-semigroup on a Banach space $X,$
and $\overline{{\rm ran}}(A)=X.$
\begin{itemize}
\item [(i)]
If $f\in \mathcal{S}_2,$ then $f$ is regularizable by
$e(z)=z^2/(1+z)^2\in  {\rm A}^1_+(\mathbb C_+),$ and thus belongs to the extended Hille-Phillips calculus.
Moreover,
${\rm ran}\,(A^2)$
is a core for $f(A).$
\item [(ii)] If
 $f\in \tilde{\mathcal{S}}_2, f \neq 0,$ then $1/f$  is regularizable by
$e(z)=1/(1+z)^2\in  {\rm A}^1_+(\mathbb C_+).$  Hence $1/f$ belongs to the extended Hille-Phillips calculus
and, moreover, ${\rm dom}\,(A^2)$ is a core for $(1/f)(A)$.
\end{itemize}
\end{proposition}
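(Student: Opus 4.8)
The plan is, for each part, to check the three defining requirements of regularizability for the proposed regularizer $e$ — namely $e\in{\rm A}^1_+(\mathbb C_+)$, the relevant product again lies in ${\rm A}^1_+(\mathbb C_+)$, and $e(A)$ is injective — and then to read off the core statements from the single observation that ${\rm ran}\,(e(A))$, which by \eqref{domian} is always contained in the domain of the regularized operator, coincides with ${\rm ran}\,(A^2)$ in (i) and with ${\rm dom}\,(A^2)$ in (ii). Indeed $e(A)=A^2(1+A)^{-2}$ in (i) gives ${\rm ran}\,(e(A))=A^2[(1+A)^{-2}X]=A^2[{\rm dom}\,(A^2)]={\rm ran}\,(A^2)$, while $e(A)=(1+A)^{-2}$ in (ii) gives ${\rm ran}\,(e(A))={\rm dom}\,(A^2)$.

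For (i) write $e(z)=(z/(1+z))^2$; since $z/(1+z)=1-1/(1+z)=\mathcal L(\delta_0-e^{-s}\,ds)$, squaring shows $e\in{\rm A}^1_+(\mathbb C_+)$, and $e(A)=[A(1+A)^{-1}]^2$ is injective because $A$ is injective and $1+A$ is boundedly invertible. To handle $ef$ I substitute the representation $f(z)=a+\int_0^\infty(z+\tau)^{-2}\,\mu(d\tau)$: the constant term yields $a\,e\in{\rm A}^1_+(\mathbb C_+)$, and each kernel becomes $e(z)(z+\tau)^{-2}=g_\tau(z)^2$ with $g_\tau(z)=z/[(1+z)(z+\tau)]$. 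A partial-fraction expansion gives $g_\tau=\mathcal L p_\tau$ with $p_\tau(s)=(e^{-s}-\tau e^{-\tau s})/(1-\tau)$, so that $g_\tau^2=\mathcal L(p_\tau\ast p_\tau)$ and $\|g_\tau^2\|_{{\rm A}^1_+(\mathbb C_+)}\le\|p_\tau\|_{L^1}^2$.

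The crux — and the step I expect to be the main obstacle — is the uniform bound $\|p_\tau\|_{L^1}^2\le C(1+\tau)^{-2}$. A naive triangle-inequality estimate of the partial fractions blows up as $\tau\to1$, so instead I would compute $\|p_\tau\|_{L^1}$ exactly: as $\int_0^\infty p_\tau\,ds=0$ the positive and negative parts of $p_\tau$ are equal, and locating its single sign change at $s=\ln\tau/(\tau-1)$ produces the closed form $\|p_\tau\|_{L^1}=\tfrac{2}{\tau}\,\tau^{-1/(\tau-1)}$. This function is continuous and bounded on $(0,\infty)$, tends to $2$ as $\tau\to0^+$ and to $2/\tau$ as $\tau\to\infty$, hence is dominated by $C(1+\tau)^{-1}$; squaring and integrating against $\mu$ gives $\int_0^\infty\|g_\tau^2\|_{{\rm A}^1_+(\mathbb C_+)}\,\mu(d\tau)\le C\int_0^\infty(1+\tau)^{-2}\,\mu(d\tau)<\infty$ by the growth condition on $\mu$. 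A Fubini argument then packages $ef$ as the Laplace transform of a single bounded measure, so $ef\in{\rm A}^1_+(\mathbb C_+)$. For the core, having ${\rm ran}\,(A^2)={\rm ran}\,(e(A))\subset{\rm dom}\,(f(A))$ from \eqref{domian}, I introduce $S_\lambda=[A(\lambda+A)^{-1}]^2$, which are functions of $A$ in the calculus with ${\rm ran}\,(S_\lambda)={\rm ran}\,(A^2)$ and, by range-density, $S_\lambda\to I$ strongly as $\lambda\to0^+$; since $S_\lambda$ commutes with $f(A)$ by \eqref{hpfc.e.prod}, the vectors $x_\lambda:=S_\lambda x\in{\rm ran}\,(A^2)$ satisfy $x_\lambda\to x$ and $f(A)x_\lambda=S_\lambda f(A)x\to f(A)x$ for every $x\in{\rm dom}\,(f(A))$, which is the core property.

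Part (ii) follows the same scheme but is lighter. Writing $f=f_0^2$ with $f_0\in\mathcal S$ gives $1/f=(1/f_0)^2$ with $1/f_0\in\mathcal{CBF}$; since complete Bernstein functions are regularizable by $1/(1+z)$, the factor $q:=(1/f_0)/(1+z)$ lies in ${\rm A}^1_+(\mathbb C_+)$, whence $e\cdot(1/f)=q^2\in{\rm A}^1_+(\mathbb C_+)$, while $e(z)=(1+z)^{-2}=\mathcal L(s e^{-s}\,ds)$ and $e(A)=(1+A)^{-2}$ is injective. As ${\rm ran}\,(e(A))={\rm dom}\,(A^2)$, \eqref{domian} gives ${\rm dom}\,(A^2)\subset{\rm dom}\,((1/f)(A))$, and the core is obtained exactly as above using $T_\lambda=[\lambda(\lambda+A)^{-1}]^2$, which satisfy ${\rm ran}\,(T_\lambda)={\rm dom}\,(A^2)$, converge strongly to $I$ as $\lambda\to\infty$, and commute with $(1/f)(A)$.
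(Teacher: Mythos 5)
Your proof is correct, and parts of it coincide with the paper's argument: the verification that $e(A)$ is injective, the identification of ${\rm ran}\,(e(A))$ with ${\rm ran}\,(A^2)$ resp.\ ${\rm dom}\,(A^2)$ via \eqref{domian}, the core argument through the approximating operators $[A(\lambda+A)^{-1}]^2\to I$ (resp.\ $[\lambda(\lambda+A)^{-1}]^2\to I$) combined with the product rule \eqref{hpfc.e.prod}, and the whole of part (ii) are essentially identical to what the paper does. Where you genuinely diverge is the key analytic step of (i), namely showing $ef\in{\rm A}^1_+(\mathbb C_+)$. The paper's main proof works with the global representation $f=\mathcal L\bigl(s\int_0^\infty e^{-s\tau}\mu(d\tau)\bigr)$, splits the Laplace integral at $s=1$, and integrates by parts twice on the tail to peel off $z^{-1}$ and $z^{-2}$ terms plus an absolutely integrable remainder. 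You instead work kernel by kernel, writing $e(z)(z+\tau)^{-2}=g_\tau(z)^2$ with $g_\tau=\mathcal L p_\tau$, computing $\|p_\tau\|_{L^1}=\tfrac{2}{\tau}\tau^{-1/(\tau-1)}$ exactly (your formula checks out, including the limits $2$ at $\tau\to0^+$, $2/e$ at $\tau=1$, and $2/\tau$ at infinity), and integrating the resulting uniform bound $\|g_\tau^2\|_{{\rm A}^1_+(\mathbb C_+)}\le C(1+\tau)^{-2}$ against $\mu$. This is in substance the alternative proof the authors themselves sketch in Remark \ref{zzz}, where the kernel $r_0(t,\tau)$ of $e\cdot(\cdot+\tau)^{-2}$ is computed explicitly and shown to satisfy $\int_0^\infty|r_0(t,\tau)|\,dt\le c(1+\tau)^{-2}$; your convolution-square formulation $r_0(\cdot,\tau)=p_\tau\ast p_\tau$ with the exact sign-change computation is arguably cleaner and avoids both the paper's integration by parts and the "not too hard to show" estimate of the Remark. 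The trade-off is that your route leans on the Bochner-integral/point-evaluation packaging (which the paper only deploys later, in the proof of Theorem \ref{HF}), whereas the paper's main proof of the Proposition stays entirely at the level of scalar Laplace transforms. Both are complete; just make sure to note explicitly that $\tau\mapsto p_\tau$ is continuous in $L^1$ (so the $\mu$-integral of $g_\tau^2$ is a genuine Bochner integral) and that the case $\tau=0$ is covered by your bound even though $\|p_0\|_{L^1}=1$ differs from the right-hand limit $2$.
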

\begin{proof}
To prove (i) note that, since $A^2(I+A)^{-2}$ is injective, a
holomorphic function $f:\mathbb C_+ \to \mathbb C$ is
regularizable by $e$ if and only if  $e f \in  {\rm A}^1_+(\mathbb
C_+).$ Without loss of generality, we can assume that $f\in
\mathcal{S}_2$ is of the form
\begin{equation}\label{S1}
f(z)=\int_0^\infty \frac{\mu(d\tau)}{(z+\tau)^2},\qquad z \in
\mathbb C_+.
\end{equation}
Then
\begin{equation}\label{SS}
f(z)= \int_0^\infty e^{-zs} s\int_0^\infty
e^{-s\tau}\,\mu(d\tau)\,ds,\qquad z \in \mathbb C_+.
\end{equation}
 Since
\begin{align*}
f(z)=&\int_0^\infty e^{-zs} s \int_0^\infty e^{-st}\mu(dt)\,ds\\
=&\int_0^1 e^{-zs} s \int_0^\infty e^{-st}\mu(dt)\,ds
+\int_1^\infty e^{-zs} s \int_0^\infty e^{-st}\mu(dt)\,ds
\end{align*}
it is enough to prove that the second term above is regularizable by $e$.

To this aim note that
\begin{align*}
&\int_1^\infty e^{-zs} s \int_0^\infty e^{-st}\mu(dt)\,ds
\\
=&\frac{e^{-z}}{z} \int_0^\infty e^{-t}\mu(dt)
+\frac{1}{z}\int_1^\infty e^{-sz} \int_0^\infty (1-st) e^{-st}\mu(dt)\,ds
\\
=&\frac{e^{-z}}{z} \int_0^\infty e^{-t}\mu(dt)
+\frac{e^{-z}}{z^2} \int_0^\infty (1-t) e^{-t}\mu(dt)\\
+&\frac{1}{z^2}\int_1^\infty  e^{-zs}\int_0^\infty t(st-2) e^{-st}\mu(dt)\,ds.
\end{align*}
The first two terms in the last display are clearly regularizable by $e$. Let us show that the third term
is regularizable by $e$ too.
Since
\begin{equation*}
\int_1^\infty \left|\int_0^\infty t(ts-2) e^{-st}\,\mu(dt)\right|\,ds\le
\int_0^\infty \int_1^\infty t(ts+2) e^{-st}\,ds\, \mu(dt),
\end{equation*}
and
\begin{equation*}
\int_1^\infty t(ts+2) e^{-st}\,ds=(t+3)e^{-t},
\end{equation*}
it follows that there exists $c>0$ such that
\begin{align*}
\int_1^\infty \left|\int_0^\infty t(ts-2) e^{-st}\,\mu(dt)\right|\,ds\le&
\int_0^\infty (t+3)e^{-t}\,\mu(dt)
\\
\le& c\int_0^\infty \frac{\mu(dt)}{(1+t)^2}\,<\infty.
\end{align*}
Thus, the function
\begin{equation*}
s \mapsto \int_0^\infty t(st-2) e^{-st}\mu(dt)
\end{equation*}
is integrable on $[1,\infty).$ This shows that
\begin{equation*}
z \mapsto \int_1^\infty e^{-zs} s \int_0^\infty e^{-st}\mu(dt)\,ds
\end{equation*}
is regularizable by $e,$ and yields $e f \in {\rm A}^1_+(\mathbb C_+)$.

Moreover, by
\eqref{domian} we have
\[
{\rm ran}\,(A^2)={\rm ran}\, (A^2(I+A)^{-2})\subset{\rm dom}\,(f(A)).
\]
To prove that ${\rm ran}\, (A^2)$ is a a core for $f(A)$ note that if
$e_\epsilon(A)=A^2(\epsilon+ A)^{-2},$ $\epsilon>0,$ then $e_\epsilon(A) x \to x$ for every
$x \in X$ as  $\epsilon \to 0.$ Since $e_\epsilon(z)\in  {\rm A}^1_+(\mathbb C_+)$  for each $\epsilon>0$,  the product rule \eqref{hpfc.e.prod}
implies that if $x \in {\rm dom}\,
(f(A))$ and $f(A)x = y$ then $f(A)e_\epsilon(A)x = e_\epsilon(A)y.$
As ${\rm ran}\,(e_\epsilon(A))= {\rm ran}\,(A^2),$ $\epsilon>0$, the statement follows.

Let us prove (ii) now. Set $g(z)=1/f(z),$ $z >0$. As the reciprocal of a nonzero complete Bernstein function is a Stieltjes function,  $g$ is a product of two complete Bernstein functions.
Then $(1+z)^{-2} g \in  {\rm A}^1_+(\mathbb C_+)$ and, since $(1+A)^2$ is injective,
the function  $g$ is regularizable by $1/(1+z)^2.$
Hence \eqref{domian} yields
\[
{\rm dom}\, (A^2)={\rm ran}\,((1+A)^{-2})\subset {\rm dom}\,(g(A)).
\]
Furthermore, if
$e_\epsilon(A)=(1+\epsilon A)^{-2},$ $\epsilon>0,$ then $e_\epsilon(A) x \to x$ for every
$x \in X$ as  $\epsilon \to 0.$
Arguing as in the proof of (i), we infer that  ${\rm dom}\,(A^2)$ is a core for $g(A)$.
\end{proof}

\begin{remark}\label{zzz}
Let  $f\in \mathcal{S}_2$ be of the form \eqref{S1}. Using
\eqref{SS} and
\[
\left(\frac{z}{1+z}\right)^2
=1+\int_0^\infty (t-2) e^{-t} e^{-zt}\,dt,
\]
by simple transformations, we obtain
\[
\frac{z^2}{(1+z)^2}f(z)=
\int_0^\infty e^{-zt} r(t)\, dt,\quad z\in \mathbb C_{+},
\]
where
\begin{align*}
r(t):=&\int_0^\infty r_0(t,\tau)\,\mu(d\tau),\quad t \ge 0,
\\
r_0(t,\tau)
 :=
&
 te^{-t\tau}
+e^{-t}\int_0^t e^{-s\tau} e^s s(t-s-2)\,ds
\\
=&\begin{cases} \frac{\left(-2+(1-\tau)\tau t\right) \tau e^{-\tau t}
+\left(t+(2-t)\tau \right)e^{-t}}{(1-\tau)^3}, & \tau \neq 1,\\
te^{-t}(t^2/6 - t + 1), & \tau=1.
\end{cases}
\end{align*}
Moreover, it is not to hard to show  that
\begin{align*}
\int_0^\infty |r(t)|\,dt \le&
\int_0^\infty  \int_0^\infty |r_0(t,\tau)|\,dt\,\mu(d\tau)\\
\le& c\int_0^\infty \frac{\mu(d\tau)}{(1+\tau)^2}<\infty,
\end{align*}
for some constant $c>0.$ This leads to an alternative proof of Proposition \ref{regS}, (i).
\end{remark}

For $n \in \mathbb N$ and $t >0,$ denote
\begin{align}
(n-1)!\, L_{n,t}[m]:=& \int_0^\infty s^{n-1} e^{-s}\int_0^{t|1-s/n|} m(v)\, dv\, ds\label{LemL}\\
+&\int_0^\infty
\left|\int_0^{n(v/t+1)} s^{n-1}
e^{-s}\,[m(v+t-st/n)-m(v)]\,ds \right|\,dv.\notag
\end{align}

\begin{lemma}\label{compM}
Let $m$ be a positive  measurable function on $[0,\infty)$ such that its Laplace transform
$(\mathcal L m)(z)$ exists for every $z \in \mathbb C_+.$
Then for any $n \in \mathbb N$ and $t >0,$
\begin{equation}\label{repres0}
\Delta_{n,t}(z)(\mathcal L m)(z)=\int_0^\infty e^{-sz} q_{n,t}(s)\,ds,\quad z\in \mathbb C_{+},
\end{equation}
where $q_{n,t}(s)$ is a measurable on $[0,\infty)$ and
\begin{align*}
\int_0^\infty |q_{n,t}(s)|\,ds\le  L_{n,t}[m].
\end{align*}
\end{lemma}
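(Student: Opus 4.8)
The plan is to realise both summands of $\Delta_{n,t}(z)(\mathcal L m)(z)$ as genuine Laplace transforms and then read off $q_{n,t}$ as the difference of the two densities. Writing $r_{n,t}$ via \eqref{rn}, for ${\rm Re}\,z>0$ one has
\[
r_{n,t}(z)(\mathcal L m)(z)=\frac{1}{(n-1)!}\int_0^\infty\int_0^\infty s^{n-1}e^{-s}e^{-z(st/n+v)}m(v)\,ds\,dv .
\]
Since $(\mathcal L m)(\sigma)<\infty$ for every real $\sigma>0$ by hypothesis and $r_{n,t}(\sigma)<\infty$, the double integral of the absolute value equals $r_{n,t}(\sigma)(\mathcal L m)(\sigma)<\infty$, so Fubini applies. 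Substituting $w=v+st/n$ in the inner integral and interchanging the order of integration (for fixed $w$ the constraint $v=w-st/n\ge0$ becomes $s\le nw/t$) gives $r_{n,t}(z)(\mathcal L m)(z)=\int_0^\infty e^{-zw}p_{n,t}(w)\,dw$ with
\[
p_{n,t}(w):=\frac{1}{(n-1)!}\int_0^{nw/t}s^{n-1}e^{-s}\,m(w-st/n)\,ds .
\]
Likewise $e_t(z)(\mathcal L m)(z)=\int_0^\infty e^{-zw}\,\chi_{[t,\infty)}(w)\,m(w-t)\,dw$, so \eqref{repres0} holds (after renaming $w$ as $s$) with the measurable function $q_{n,t}(w)=p_{n,t}(w)-\chi_{[t,\infty)}(w)\,m(w-t)$. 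It remains to estimate $\int_0^\infty|q_{n,t}(w)|\,dw$.

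I would split this integral at $w=t$. On $[0,t)$ the subtracted term vanishes and, since $m\ge0$, $|q_{n,t}(w)|=p_{n,t}(w)$; integrating and swapping order (for fixed $s$ one needs $st/n\le w<t$, which forces $s<n$) and substituting $v=w-st/n$ yields
\[
\int_0^t|q_{n,t}(w)|\,dw=\frac{1}{(n-1)!}\int_0^n s^{n-1}e^{-s}\int_0^{t(1-s/n)}m(v)\,dv\,ds,
\]
which is exactly the $s<n$ part of the first integral in $(n-1)!\,L_{n,t}[m]$.

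For $w\ge t$ I would put $v=w-t\ge0$, so that $w-st/n=v+t-st/n$ and the upper limit becomes $nw/t=n(v/t+1)$, matching the truncation in the second integral of $L_{n,t}[m]$. The decisive manoeuvre is to split $m(v)$ by means of \eqref{int1},
\[
m(v)=\frac{1}{(n-1)!}\Bigl(\int_0^{n(v/t+1)}+\int_{n(v/t+1)}^\infty\Bigr)s^{n-1}e^{-s}\,ds\;m(v),
\]
so that the part with matched limits pairs with $p_{n,t}$ to produce the difference $m(v+t-st/n)-m(v)$, leaving the tail term apart. The triangle inequality then gives
\[
|q_{n,t}(v+t)|\le\frac{1}{(n-1)!}\left|\int_0^{n(v/t+1)}s^{n-1}e^{-s}\bigl[m(v+t-st/n)-m(v)\bigr]\,ds\right|+\frac{1}{(n-1)!}\int_{n(v/t+1)}^\infty s^{n-1}e^{-s}m(v)\,ds .
\]
Integrating in $v$, the first summand is precisely the second integral of $(n-1)!\,L_{n,t}[m]$, while for the tail a further Fubini step (for fixed $s$ the condition $n(v/t+1)\le s$ forces $s>n$ and $0\le v\le t(s/n-1)$) recovers $\frac{1}{(n-1)!}\int_n^\infty s^{n-1}e^{-s}\int_0^{t(s/n-1)}m(v)\,dv\,ds$, the $s>n$ part of the first integral of $L_{n,t}[m]$. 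Adding the two regions reconstitutes $L_{n,t}[m]$ exactly. The main obstacle is organisational rather than analytic: one must recognise that $n(v/t+1)$ is the value of $s$ at which the shifted argument $v+t-st/n$ reaches $0$, and then rewrite $m(v)$ with that same truncated Gamma integral, so that the difference structure of $L_{n,t}[m]$ and the leftover tail reassemble without any loss; the integrability needed to justify all the interchanges is supplied by the standing assumption that $\mathcal L m$ exists throughout $\mathbb C_+$.
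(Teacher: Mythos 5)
Your argument is correct and follows essentially the same route as the paper: realise $r_{n,t}(z)(\mathcal L m)(z)$ and $e_t(z)(\mathcal L m)(z)$ as Laplace transforms via Fubini, form $q_{n,t}$ as the difference of the densities, and use \eqref{int1} to split the reference term into the matched truncated integral plus its tail so that the triangle inequality reassembles exactly the two integrals defining $L_{n,t}[m]$. The only (welcome) addition is your explicit justification of the Fubini interchanges, which the paper leaves implicit.
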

\begin{proof}
By \eqref{rn}  we have for every $z\in \mathbb C_{+}$
\begin{equation*}
r_{n,t}(z)(\mathcal L m)(z) =\frac{1}{(n-1)!h^n}\int_0^\infty
e^{-uz} \left(\int_0^u s^{n-1} e^{-h^{-1}
s}\,m(u-s)\,ds\right)\,du.
\end{equation*}
Hence after a change of variable $s\mapsto s t/n$
\begin{equation*}\label{Rep10}
r_{n,t}(z)(\mathcal L m)(z)=\frac{1}{(n-1)!}\int_0^\infty e^{-uz}
\left(\int_0^{nu/t} s^{n-1} e^{-s}\,m(u-st/n)\,ds\right)\,du.
\end{equation*}
On the other hand,
\begin{equation*}\label{Rep20}
e^{-tz}(\mathcal L m)(z)=\int_0^\infty e^{-(u+t)z} m(u)\,du
=\int_t^\infty e^{-u z}\,m(u-t)\,du,\;\;z\in \mathbb C_{+}.
\end{equation*}

Then the above two formulas yield  \eqref{repres0}
with
\begin{equation}\label{repr10}
q_{n,t}(u)
:=\frac{1}{(n-1)!}\int_0^{nu/t} s^{n-1}
e^{-s}\,m(u-st/n)\,ds-\chi(u-t) m(u-t),
\end{equation}
where $\chi(\cdot)$ is the characteristic function of $[0,\infty).$

Taking into account (\ref{int1}) we transform (\ref{repr10}) further
to the form
\begin{align*}
q_{n,t}(u)=&\frac{\chi(t-u)}{(n-1)!}\int_0^{nu/t} s^{n-1}
e^{-s}\,m(u-st/n)\,ds\\
-&\frac{\chi(u-t)}{(n-1)!}\int_{nu/t}^\infty s^{n-1} e^{-s} m(u-t)\,ds
\label{repr100}\\
+&\frac{\chi(u-t)}{(n-1)!}
\int_0^{nu/t} s^{n-1}e^{-s}\,[m(u-st/n)-m(u-t)]\,ds.
\end{align*}
Hence
for any $n\in\mathbb N$ and $t>0,$
\begin{align*}
&\int_0^\infty |q_{n,t}(u)|\,du
\le \frac{1}{(n-1)!}\int_0^t\int_0^{nu/t} s^{n-1}
e^{-s}\,m(u-st/n)\,ds\,du
\\
+&\frac{1}{(n-1)!}\int_t^\infty \int_{nu/t}^\infty s^{n-1} e^{-s} m(u-t)\,ds\,du
\\
+&\frac{1}{(n-1)!}\int_t^\infty
\left|\int_0^{nu/t} s^{n-1}
e^{-s}\,[m(u-st/n)-m(u-t)]\,ds
\right|\,du
\\
=&\frac{1}{(n-1)!}\int_0^\infty s^{n-1} e^{-s}\int_0^{t|1-s/n|} m(v)\, dv \, ds
\\
+&\frac{1}{(n-1)!}\int_0^\infty
\left|\int_0^{n(v/t+1)} s^{n-1}
e^{-s}\,[m(v+t-st/n)-m(v)]\,ds \right|\,dv.
\end{align*}
The proof is complete.
\end{proof}
Let us illustrate Lemma \ref{compM} with several examples.

\begin{example}\label{Ex1}
$ a)$ Let
\[
f_1(z):=\frac{1}{z}=\int_0^\infty e^{-zv}\,dv, \qquad z \in \mathbb C_+.
\]
Then $f_1=\mathcal L m$ with $m(v)\equiv 1$ for $v \ge 0,$ and using
\eqref{int1} and \eqref{int20},
we have
\begin{align*}
L_{n,t}[m]=&\frac{1}{(n-1)!}\int_0^\infty s^{n-1} e^{-s} t|1-s/n| \, ds\\
\le& \frac{t}{(n-1)!}\left(\int_0^\infty s^{n-1} e^{-s} \, ds\right)^{1/2}
\left(\int_0^\infty s^{n-1} e^{-s} (1-s/n)^2 \, ds\right)^{1/2}\\
=&\frac{t}{\sqrt{n}}, \qquad n\in \mathbb N,\;\;t>0.
\end{align*}
Hence by Lemma \ref{compM},
\[
\|\Delta_{n,t} f_1\|_{{\rm A}^1_+(\mathbb C_+)}\le
\frac{t}{\sqrt{n}},\qquad n\in \mathbb N,\;\;t>0.
\]

$b)$  Let
\[
f_2(z):=\frac{1}{z^2}=\int_0^\infty e^{-zv} v\, dv, \qquad z \in \mathbb C_+,
\]
so that $f_2=\mathcal L m$ with $m(v)=v$ for $v\ge 0.$
By (\ref{zero}) we have
\begin{equation}\label{mod0}
\left|\int_0^{n(v/t+1)} s^{n-1}
e^{-s}\,(1-s/n)\, ds\right|
=\int_{n(v/t+1)}^\infty s^{n-1}
e^{-s}\,(s/n-1)\, ds,
\end{equation}
and then (see \eqref{int20})
\begin{align*}
L_{n,t}[m]
=&\frac{t^2}{2(n-1)!}\int_0^\infty s^{n-1} e^{-s} (1-s/n)^2\, ds\\
+&\frac{t}{(n-1)!}\int_0^\infty \int_{n(v/t+1)}^\infty  s^{n-1}
e^{-s}\,(s/n-1)\,ds \,dv
\\
\le&\frac{3t^2}{2(n-1)!}\int_0^\infty
 s^{n-1}
e^{-s}\,(1-s/n)^2\,ds \,dv\\
=&\frac{3t^2}{2n}, \qquad n\in \mathbb N,\;\;t>0.
\end{align*}
So, in this case, by Lemma \ref{compM},
\[
\|\Delta_{n,t}f_2\|_{{\rm A}^1_+(\mathbb C_+)}\le \frac{3
t^2}{2n},\qquad n\in \mathbb N,\quad t>0.
\]
c) Let
\[
f_3(z):=\left(1+\frac{\lambda}{z}\right)^2,\qquad \lambda>0, \quad z \in \mathbb C_+.
\]
Instead of identifying $m$ here we use the previous two examples.
Observe that by \eqref{rn} we have $\Delta_{n,t}\in {\rm A}^1_+(\mathbb C_+)$, and
\begin{equation}\label{deltaA}
\|\Delta_{n,t}\|_{{\rm A}^1_+(\mathbb C_+)}\le 2,\qquad n\in
\mathbb N,\quad t>0.
\end{equation}
By \eqref{deltaA} and Examples \ref{Ex1} a), b),
\begin{align*}
\|\Delta_{n,t}f_3\|_{{\rm A}^1_+(\mathbb C_+)}\le&
2+2\lambda \|\Delta_{n,t}f_1\|_{{\rm A}^1_+(\mathbb C_+)}+\lambda^2\|\Delta_{n,t}f_2\|_{{\rm A}^1_+(\mathbb C_+)}\\
\le& 2+\frac{2\lambda t}{\sqrt{n}}+\frac{3\lambda^2 t^2}{2n}\le
2\left(1+\frac{\lambda t}{\sqrt{n}}\right)^2, \qquad n\in \mathbb
N,\;\;t>0.
\end{align*}
\end{example}
\smallskip

The following technical lemma is crucial in the proof of the main result of this section, Theorem \ref{HF}.
We shift its proof to Appendix to clarify our presentation.
\begin{lemma}\label{LL}
Let  $\tau \ge 0,$ $t>0,$ and $n\in \mathbb N$ be fixed.
If
\begin{align}
Q_{n,t}^{(1)}(\tau):=&
\frac{1}{\tau^2}\int_0^\infty
s^{n-1}e^{-s}\,\bigl(1-(1+\tau t|1-s/n|)e^{-\tau t|1-s/n|}\bigr)\,ds, \label{L1lem}
\end{align}
and
\begin{align}\label{L2lem}
Q_{n,t}^{(2)}(\tau):
=& \int_0^\infty
e^{-\tau v} \psi_{n,t}(v) \, dv,\\
\psi_{n,t}(v):=&\left|\int_0^{n(v/t+1)} s^{n-1}
e^{-s}\,\bigl((v+t-st/n)e^{-\tau t(1-s/n)}-v \bigr)\, ds\right|,\notag
\end{align}
then
\begin{equation}\label{lnt}
Q_{n,t}(\tau):=\frac{Q_{n,t}^{(1)}(\tau)+Q_{n,t}^{(2)}(\tau)}{(n-1)!} \le \frac{12}{(\sqrt{n}/t+\tau)^2}.
\end{equation}
\end{lemma}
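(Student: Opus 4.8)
The plan is to recognize that $Q_{n,t}(\tau)$ is precisely the quantity $L_{n,t}[m]$ of Lemma~\ref{compM} for the choice $m(v)=ve^{-\tau v}$, whose Laplace transform is $(\mathcal L m)(z)=(z+\tau)^{-2}$, the building block of the class $\mathcal S_2$. Indeed, $\int_0^a ve^{-\tau v}\,dv=\tau^{-2}\bigl(1-(1+\tau a)e^{-\tau a}\bigr)$ with $a=t|1-s/n|$ reproduces the first summand of \eqref{LemL} as $Q_{n,t}^{(1)}$, while the factor $e^{-\tau v}$ pulled out of $m(v+t-st/n)-m(v)$ produces the weight in \eqref{L2lem}; thus this is the $\tau$-parametrized version of Example~\ref{Ex1}~b) (which is the case $\tau=0$, $m(v)=v$). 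A first simplification is that each of $Q^{(1)},Q^{(2)}$ obeys the scaling $Q^{(i)}_{n,t}(\tau)=t^2Q^{(i)}_{n,1}(t\tau)$, and the right-hand side of \eqref{lnt} equals $t^2\cdot 12/(\sqrt n+t\tau)^2$, so one may assume $t=1$; I shall nonetheless keep $t$ general, since it costs nothing. It then suffices to bound $Q^{(1)}/(n-1)!$ and $Q^{(2)}/(n-1)!$ separately, each by a constant multiple of $(\sqrt n/t+\tau)^{-2}$.

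For $Q^{(1)}$ I would use the two elementary estimates $0\le 1-(1+x)e^{-x}\le \min\{x^2/2,\,1\}$ for $x\ge0$: the first since $\frac{d}{dx}\bigl[x^2/2-1+(1+x)e^{-x}\bigr]=x(1-e^{-x})\ge0$, the second since $(1+x)e^{-x}\ge0$. Applying the $x^2/2$-bound with $x=\tau t|1-s/n|$ together with the identity \eqref{int20} gives $Q^{(1)}_{n,t}(\tau)\le (t^2/2)(n-1)!/n$, while the bound by $1$ together with \eqref{int1} gives $Q^{(1)}_{n,t}(\tau)\le (n-1)!/\tau^2$. Hence $Q^{(1)}/(n-1)!\le\min\{t^2/(2n),\,1/\tau^2\}$, and since $(\sqrt n/t+\tau)^2\le 2(n/t^2+\tau^2)\le 4\max\{n/t^2,\tau^2\}$, this minimum is at most $4(\sqrt n/t+\tau)^{-2}$.

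The genuinely hard part is $Q^{(2)}$. Here I would pass to the variable $w=t(1-s/n)$, in which the probability density $s^{n-1}e^{-s}\,ds/(n-1)!$ concentrates, by Stirling's formula, near $w=0$ like a Gaussian of width $t/\sqrt n$; this is exactly what produces the scale $\sqrt n/t$ in \eqref{lnt}. Writing the bracket in \eqref{L2lem} as $(v+w)e^{-\tau w}-v=v(e^{-\tau w}-1)+we^{-\tau w}$ and splitting the inner integral at $s=n$ (that is, $w=0$), the region $w\ge0$ is controlled directly via $0\le 1-e^{-\tau w}\le\tau w$ and the moment bounds furnished by \eqref{zero} and \eqref{int20}, the pure $w$-term being handled by the cancellation \eqref{zero} exactly as in \eqref{mod0}. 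On the region $w<0$, i.e. $s\in[n,n(1+v/t)]$, the factor $e^{-\tau w}=e^{\tau|w|}$ grows but is bounded by $e^{\tau v}$, and this exponential is absorbed by the weight $e^{-\tau v}$ of the outer integral in \eqref{L2lem}; this is the mechanism that keeps $Q^{(2)}$ finite for all $\tau$, the truncation at $s=n(1+v/t)$ being essential since the untruncated $s$-integral diverges for $\tau t\ge n$. Carrying out the resulting integrations and splitting into the ranges $\tau\le\sqrt n/t$ and $\tau\ge\sqrt n/t$ then yields $Q^{(2)}/(n-1)!\le C(\sqrt n/t+\tau)^{-2}$ with an absolute constant $C$; combined with the bound $4$ for $Q^{(1)}$ this gives \eqref{lnt}, with room to spare for the constant $12$.

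The main obstacle is precisely the region $s>n$ ($w<0$), where the integrand grows like $e^{\tau|w|}$: there the truncation cannot be discarded, and the required estimates rest on sharp tail bounds for the Gamma density $s^{n-1}e^{-s}/(n-1)!$ that correctly reflect the $t/\sqrt n$ concentration scale. Balancing the $e^{\tau v}$ growth inside $\psi_{n,t}(v)$ against the $e^{-\tau v}$ decay of the outer integral, while at the same time extracting decay in $\tau$ uniformly in $n$, is the delicate point and the reason the computation is relegated to the appendix. The value $12$ is not optimal and merely a convenient round constant emerging from the two case splits.
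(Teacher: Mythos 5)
Your overall architecture is exactly the paper's: identify $Q_{n,t}(\tau)$ as $L_{n,t}[m]$ for $m(v)=ve^{-\tau v}$, prove one bound of order $\tau^{-2}$ and one of order $t^2/n$, and combine them via $\min\{a^{-2},b^{-2}\}\le 4/(a+b)^2$ (the paper gets $2/\tau^2$ and $3t^2/n$ for the sum $Q^{(1)}+Q^{(2)}$, whence the constant $12$). Your treatment of $Q^{(1)}$ via $1-(1+x)e^{-x}\le\min\{x^2/2,1\}$ together with \eqref{int1} and \eqref{int20} is complete and correct, and your scaling observation $Q^{(i)}_{n,t}(\tau)=t^2Q^{(i)}_{n,1}(t\tau)$, though unused by the paper, is valid.

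The gap is in the $Q^{(2)}$ estimate, which you only sketch, and the sketch as written contains a step that is too weak to give the claimed rate. In the regime that must produce the bound $O(t^2/n)$ you propose to control the term $v(e^{-\tau w}-1)$ (with $w=t(1-s/n)\ge 0$) "directly via $0\le 1-e^{-\tau w}\le\tau w$", reserving the cancellation \eqref{zero} for the pure $w$-term only. But $v\,\tau w$ integrated against $e^{-\tau v}\,dv$ gives $\tau^{-1}t(1-s/n)$, and after Cauchy--Schwarz and \eqref{int20} this contributes $O\!\left(t/(\tau\sqrt n)\right)$ to $Q^{(2)}/(n-1)!$; since $t/(\tau\sqrt n)\ge \max\{t^2/n,\,\tau^{-2}\}$ except at $\tau=\sqrt n/t$, this term alone already exceeds $12/(\sqrt n/t+\tau)^2$ throughout both regimes. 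The cure is to apply the linear cancellation to the $v$-coefficient as well: subtract $v\tau w$ using \eqref{zero}/\eqref{mod0} (keeping track of the tail $\int_{n(v/t+1)}^\infty$ created by the truncation) and bound the quadratic remainders $v\,|e^{\mp\tau w}-1\pm\tau w|$ and $w|e^{\mp\tau w}-1|$ by multiples of $(\tau w)^2$ after the $v$-integration. This is precisely what the paper's function $q_{n,t}(v,s,\tau)$ and the subsequent elementary inequalities $w(1-e^{-w})\le w^2$ and $2+3w-(w^2+5w+2)e^{-w}\le 3w^2$ accomplish. With that correction your plan coincides with the published proof; as it stands, the central estimate is asserted rather than established, and the one quantitative tool you name for it does not suffice.
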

\smallskip

Lemma \ref{LL} implies the following key estimate for ${\rm A}^1_+(\mathbb C_+)$-norms of $\Delta_{n,t}f$ when $f \in {\mathcal S}_2.$
\begin{theorem}\label{HF}
Let $f\in \mathcal{S}_2$. Then $\Delta_{n,t}f  \in  {\rm A}^1_+(\mathbb C_+)$,
and
\begin{equation}
\|\Delta_{n,t}f \|_{{\rm A}^1_+(\mathbb C_+)}\le\,12 f(\sqrt{n}/t),\qquad n\in \mathbb N,\quad t>0.
\label{HFf}
\end{equation}
\end{theorem}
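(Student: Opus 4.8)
The plan is to reduce Theorem~\ref{HF} to Lemma~\ref{compM} and Lemma~\ref{LL} by representing a generalized Stieltjes function of order $2$ as a superposition of the elementary building blocks $(z+\tau)^{-2}$, for which the relevant Laplace density is explicit. First I would observe that it suffices to treat the case $a=0$ in the Stieltjes representation, since the constant $a\ge 0$ is annihilated by $\Delta_{n,t}$ up to an ${\rm A}^1_+$-contribution that can be folded into the estimate (indeed $\|\Delta_{n,t}\|_{{\rm A}^1_+(\mathbb C_+)}\le 2$ by \eqref{deltaA}, and $f(\sqrt n/t)\ge a$). Thus I may assume
\[
f(z)=\int_0^\infty \frac{\mu(d\tau)}{(z+\tau)^2},\qquad z\in\mathbb C_+,
\]
with $\mu$ a positive measure satisfying the admissibility condition $\int_0^\infty (1+\tau)^{-2}\,\mu(d\tau)<\infty$.

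Next I would fix $\tau\ge 0$ and apply Lemma~\ref{compM} to the single Stieltjes kernel. For $\tau>0$ one has $(z+\tau)^{-2}=\mathcal L m_\tau$ with $m_\tau(v)=v e^{-\tau v}$, which is a positive measurable function whose Laplace transform exists on $\mathbb C_+$; for $\tau=0$ this degenerates to $m_0(v)=v$, recovering Example~\ref{Ex1}~b). Lemma~\ref{compM} then gives $\Delta_{n,t}\,(z+\tau)^{-2}\in {\rm A}^1_+(\mathbb C_+)$ with
\[
\bigl\|\Delta_{n,t}\,(z+\tau)^{-2}\bigr\|_{{\rm A}^1_+(\mathbb C_+)}\le L_{n,t}[m_\tau].
\]
The crucial point is that substituting $m_\tau(v)=ve^{-\tau v}$ into the defining formula \eqref{LemL} for $L_{n,t}[m]$ produces exactly the two quantities of Lemma~\ref{LL}: evaluating the inner integral $\int_0^{t|1-s/n|} v e^{-\tau v}\,dv=\tau^{-2}\bigl(1-(1+\tau t|1-s/n|)e^{-\tau t|1-s/n|}\bigr)$ yields the first term $Q_{n,t}^{(1)}(\tau)/(n-1)!$, while the second term of \eqref{LemL}, after inserting $m_\tau(v+t-st/n)=(v+t-st/n)e^{-\tau(v+t-st/n)}$ and factoring out $e^{-\tau v}$, is precisely $Q_{n,t}^{(2)}(\tau)/(n-1)!$. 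Hence $L_{n,t}[m_\tau]=Q_{n,t}(\tau)$, and Lemma~\ref{LL} delivers the uniform bound
\[
\bigl\|\Delta_{n,t}\,(z+\tau)^{-2}\bigr\|_{{\rm A}^1_+(\mathbb C_+)}\le \frac{12}{(\sqrt n/t+\tau)^2},\qquad \tau\ge 0.
\]

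Finally I would integrate this estimate against $\mu$. Writing $\Delta_{n,t}f=\int_0^\infty \Delta_{n,t}(z+\tau)^{-2}\,\mu(d\tau)$ and using that ${\rm A}^1_+(\mathbb C_+)$ is (isometrically) a space of measures so that the norm passes under the integral by the triangle inequality for the total variation, I obtain
\[
\|\Delta_{n,t}f\|_{{\rm A}^1_+(\mathbb C_+)}\le \int_0^\infty \frac{12}{(\sqrt n/t+\tau)^2}\,\mu(d\tau)=12\,f(\sqrt n/t),
\]
which is exactly \eqref{HFf}; membership of $\Delta_{n,t}f$ in ${\rm A}^1_+(\mathbb C_+)$ follows from completeness of the algebra together with the finiteness of this integral, which is guaranteed by the admissibility of $\mu$. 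The main obstacle, and the reason Lemma~\ref{LL} is isolated as a separate technical statement, is the uniform-in-$\tau$ bound $Q_{n,t}(\tau)\le 12(\sqrt n/t+\tau)^2{}^{-1}$: the first piece $Q_{n,t}^{(1)}$ behaves like the pure power case and is controlled by Cauchy--Schwarz against the moment identities \eqref{int1}--\eqref{int20} (as in Example~\ref{Ex1}), but the second piece $Q_{n,t}^{(2)}$, involving the difference $m_\tau(v+t-st/n)-m_\tau(v)$ weighted by $e^{-\tau v}$, requires a careful estimate that interpolates between the $\tau\to 0$ regime (where the $(\sqrt n/t)^{-2}=t^2/n$ scaling of Example~\ref{Ex1}~b) must be recovered) and the large-$\tau$ regime (where the exponential factor forces $\tau^{-2}$ decay); matching both regimes with a single constant is precisely what makes the $(\sqrt n/t+\tau)^{-2}$ form delicate.
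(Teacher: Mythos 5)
Your proposal is correct and follows essentially the same route as the paper's own proof: reduce to the pure integral representation, apply Lemma~\ref{compM} with $m_\tau(v)=ve^{-\tau v}$, identify $L_{n,t}[m_\tau]=Q_{n,t}(\tau)$ via $\int_0^w ve^{-v}\,dv=1-(1+w)e^{-w}$, invoke Lemma~\ref{LL}, and integrate the resulting bound against $\mu$. The only cosmetic difference is that the paper justifies the final interchange of norm and integral by checking continuity of $\tau\mapsto\Delta_{n,t}\varphi_\tau$ and using a Bochner-integral inequality together with separating point evaluations, where you appeal directly to the total-variation triangle inequality.
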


\begin{proof}
According to \eqref{deltaA}
it suffices to consider
$f$  of the form \eqref{S1}.
Then
\begin{equation}\label{repdelta}
\Delta_{n,t}(z)f(z)=
\int_0^\infty \frac{\Delta_{n,t}(z)}{(z+\tau)^2}\,\mu(d\tau), \qquad z \in \mathbb C_+.
\end{equation}
For fixed  $\tau\ge 0$ define
\begin{equation*}\label{Exp}
\varphi_\tau(z):=\frac{1}{(z+\tau)^2}=\int_0^\infty e^{-zv} m(v,\tau)\,du,\quad
m(v,\tau):=ve^{-\tau v}, \quad z \in \mathbb C_+.
\end{equation*}
Using Lemma \ref{compM} with
$m(v,\tau)=v e^{-\tau v}$
and noting that
\[
\int_0^w v e^{-v} dv= 1-(1+w)e^{-w},
\]
we obtain
\begin{equation*}\label{calc1}
L_{n,t}[m(\cdot,\tau)]
=Q_{n,t}(\tau),\quad n\in \mathbb N,\quad t>0,\quad \tau\ge 0,
\end{equation*}
where $L_{n,t}[m(\cdot, \tau)]$ is defined by \eqref{LemL} and $Q_{n,t}(\tau)$ is given by \eqref{lnt}.
So, by  Lemmas  \ref{compM}   and \ref{LL}
we have
\begin{equation}\label{12a}
\|\Delta_{n,t}\varphi_\tau\|_{{\rm A}^1_+(\mathbb C_+)}\le Q_{n,t}(\tau)\le
\frac{12}{(\sqrt{n}/t+\tau)^2}.
\end{equation}
Observe further that, in view of \eqref{deltaA},
$\tau \mapsto \Delta_{n,t} \varphi_\tau$ is a continuous, ${\rm A}^1_+(\mathbb C_+)$-valued function on $(0,\infty).$
Moreover, by \eqref{12a}, $\tau \mapsto \|\Delta_{n,t}\varphi_\tau\|_{{\rm A}^1_+(\mathbb C_+)}$ is Lebesgue integrable on $[0,\infty)$  for any $t >0$
and $n \in \mathbb N.$
Thus the ${{\rm A}^1_+(\mathbb C_+)}$-valued  Bochner integral
\begin{equation*}
\int_0^\infty \Delta_{n,t}\varphi_\tau\,\mu(d\tau)
\end{equation*}
is well-defined. Since point evaluations are bounded functionals on ${\rm A}^1_+(\mathbb C_+)$ and separate elements of ${\rm A}^1_+(\mathbb C_+),$
\eqref{repdelta} implies that the integral coincides with $\Delta_{n,t}f.$
Then by
\eqref{repdelta}, \eqref{12a},
and a standard  inequality for Bochner integrals
(see e.g. \cite[Theorem 3.7.6]{HilPhi}) we obtain for any $n\in \mathbb N$ and $t>0:$
\[
\|\Delta_{n,t}f\|_{{\rm A}^1_+(\mathbb C_+)}
\le \int_0^\infty \|\Delta_{n,t}\varphi_\tau\|_{{\rm A}^1_+(\mathbb C_+)}\,\mu(d\tau)\le \,12 f(\sqrt{n}/t).
\]
\end{proof}

\section{Main results}

Theorem \ref{HF} and \eqref{hillestimate}
imply immediately  the following statement.

\begin{theorem}\label{th1}
Let  $-A$ be the generator of a $C_0$-semigroup
$(e^{-tA})_{t\ge 0}$ on a Banach space $X,$ and let $\overline{{\rm ran}}\, (A)=X.$  Assume that
\[
M:=\sup_{t\ge 0}\,\|e^{-tA}\|\,<\infty.
\]
If  $f\in \mathcal{S}_2,$ then for any $x\in X,$
\begin{equation}
\|f(A)\Delta_{n,t}(A)x\|
\le\,  12 M\|x\| f(\sqrt{n}/t),\qquad n\in\mathbb N,\quad t>0,
\label{Main0}
\end{equation}
and for any $x=f(A)y$, $y\in {\rm dom}\,(f(A))$,
\begin{equation}
\|\Delta_{n,t}(A)x\|
\le\,  12 M\|y\| f(\sqrt{n}/t),\qquad n\in\mathbb N,\quad t>0.
\label{Main1}
\end{equation}
 In particular, if
$f(z)=z^{-\alpha}$, $\alpha\in (0,2]$,
then for every $x\in {\rm dom}\,(A^\alpha),$
\begin{equation}
\|\Delta_{n,t}(A)x\|
\le 12M\|A^\alpha x\|\left(\frac{t}{\sqrt{n}}\right)^\alpha, \qquad n\in\mathbb N,\quad t>0.
\label{Main2}
\end{equation}
\end{theorem}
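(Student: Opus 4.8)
The plan is to read off all three estimates directly from Theorem \ref{HF}, transporting the scalar bound into $\mathrm{L}(X)$ via the extended Hille--Phillips calculus. The two functional-calculus ingredients are at hand: by \eqref{deltaA} we have $\Delta_{n,t}\in {\rm A}^1_+(\mathbb C_+)$, while Proposition \ref{regS}(i) guarantees that $f\in\mathcal{S}_2$ is regularizable, so $f(A)$ belongs to the extended calculus. Applying the product rule \eqref{hpfc.e.prod} with $g=\Delta_{n,t}$ then yields
\[
\Delta_{n,t}(A)f(A)\subset f(A)\Delta_{n,t}(A)=(f\Delta_{n,t})(A).
\]
Since Theorem \ref{HF} asserts $f\Delta_{n,t}\in {\rm A}^1_+(\mathbb C_+)$ with $\|f\Delta_{n,t}\|_{{\rm A}^1_+(\mathbb C_+)}\le 12 f(\sqrt n/t)$, the operator $(f\Delta_{n,t})(A)$ is bounded, and \eqref{hillestimate} gives $\|(f\Delta_{n,t})(A)\|\le M\|f\Delta_{n,t}\|_{{\rm A}^1_+(\mathbb C_+)}\le 12M f(\sqrt n/t)$.

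Next I would extract the two operator estimates. For \eqref{Main0}, boundedness of $(f\Delta_{n,t})(A)$ together with the equality $f(A)\Delta_{n,t}(A)=(f\Delta_{n,t})(A)$ (with natural domains) forces $\Delta_{n,t}(A)x\in{\rm dom}\,(f(A))$ and $f(A)\Delta_{n,t}(A)x=(f\Delta_{n,t})(A)x$ for every $x\in X$, whence the claimed norm bound with $\|x\|$. For \eqref{Main1}, given $x=f(A)y$ with $y\in{\rm dom}\,(f(A))$, the inclusion $\Delta_{n,t}(A)f(A)\subset(f\Delta_{n,t})(A)$ applied to $y$ gives $\Delta_{n,t}(A)x=(f\Delta_{n,t})(A)y$, so the same bound holds with $\|y\|$ in place of $\|x\|$.

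Finally I would specialize to $f(z)=z^{-\alpha}$ with $\alpha\in(0,2]$, which lies in $\mathcal{S}_2$ as observed earlier, so that $f(A)=A^{-\alpha}$. Because $\overline{\rm ran}\,(A)=X$ makes $A$, and hence $A^\alpha$, injective, any $x\in{\rm dom}\,(A^\alpha)$ can be written as $x=A^{-\alpha}y=f(A)y$ with $y:=A^\alpha x\in{\rm dom}\,(f(A))$; since $f(\sqrt n/t)=(t/\sqrt n)^\alpha$, estimate \eqref{Main1} becomes exactly \eqref{Main2}.

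I expect the only genuine subtlety to be the domain bookkeeping in the product rule: one must verify that the \emph{a priori} inclusion furnished by \eqref{hpfc.e.prod} upgrades to the equality $f(A)\Delta_{n,t}(A)=(f\Delta_{n,t})(A)$ valid on all of $X$, which hinges precisely on the boundedness of $(f\Delta_{n,t})(A)$ supplied by Theorem \ref{HF}. For the last part one should also record that the extended-calculus function $z^{-\alpha}$ coincides with the fractional power $A^{-\alpha}$ and is inverse to $A^\alpha$ on ${\rm dom}\,(A^\alpha)$; both points are routine given the machinery already developed, so the proof is indeed immediate.
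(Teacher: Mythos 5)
Your proposal is correct and follows exactly the route the paper intends: the paper's entire proof is the remark that Theorem \ref{HF} and \eqref{hillestimate} ``imply immediately'' the statement, and you have simply filled in the routine details (the product rule \eqref{hpfc.e.prod} upgrading to an everywhere-defined equality because $(f\Delta_{n,t})(A)$ is bounded, and the identification $f(A)=A^{-\alpha}$ with $A^{-\alpha}A^{\alpha}x=x$ for injective $A$). No gaps.
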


Let us consider now the case when $-A$ generates a bounded $C_0$-semigroup
but the range of $A$ may not be dense, so that $A$ may not be injective. We will need the next approximation result.
\begin{theorem}\label{operS2}
Let $-A$ be the generator of a  bounded $C_0$-semigroup on a Banach space $X$, and let $f \in \tilde{\mathcal{S}}_2, f \neq 0.$ If $g=1/f,$ then for every $x\in {\rm dom}\,(A^2),$
\begin{equation}\label{limit}
\lim_{\delta\to 0+}\,g(A+\delta)x=g(A)x.
\end{equation}
\end{theorem}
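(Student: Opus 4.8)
The plan is to confine the unboundedness of $g(A)$ and $g(A+\delta)$ to the fixed vector $x\in{\rm dom}\,(A^2)$ by factoring out the common regularizer $e(z)=1/(1+z)^2$, and then to let $\delta\to 0+$ via a single dominated-convergence argument for a uniformly bounded family of operators. First I would record that for $\delta>0$ the operator $-(A+\delta)$ generates the exponentially stable semigroup $(e^{-\delta t}e^{-tA})_{t\ge 0}$; hence $0\in\rho(A+\delta)$, so ${\rm ran}\,(A+\delta)=X$ and ${\rm dom}\,((A+\delta)^2)={\rm dom}\,(A^2)$. Since $e(A+\delta)=(1+\delta+A)^{-2}$ is injective, Proposition \ref{regS}(ii) applies verbatim to $A+\delta$: the function $g=1/f$ is regularizable by $e$ both for $A$ and for $A+\delta$, and ${\rm dom}\,(A^2)$ is a core for each of $g(A)$ and $g(A+\delta)$. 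In particular $g(A)x$ and $g(A+\delta)x$ are well defined for every $x\in{\rm dom}\,(A^2)$.

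Next I would set $h:=eg=g/(1+z)^2\in{\rm A}^1_+(\mathbb C_+)$ and write $h=\mathcal L\nu$ for a (unique, finite) measure $\nu\in{\rm M}(\mathbb R_+)$. Applying the product rule \eqref{hpfc.e.prod} with the regularizable factor $(1+z)^2$ and the factor $h\in{\rm A}^1_+(\mathbb C_+)$ gives $h(A)(1+A)^2\subset (1+A)^2 h(A)=g(A)$, and likewise for $A+\delta$. Restricting to ${\rm dom}\,(A^2)={\rm dom}\,((1+A)^2)$, where $h(A)$ is bounded, this yields the representations
\[
g(A)x=h(A)(1+A)^2x,\qquad g(A+\delta)x=h(A+\delta)(1+\delta+A)^2x,\qquad x\in{\rm dom}\,(A^2),
\]
where $h(A+\delta)=\int_0^\infty e^{-\delta s}e^{-sA}\,\nu(ds)$ and $\|h(A+\delta)\|\le M\,|\nu|(\mathbb R_+)$ uniformly in $\delta\ge 0$.

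I would then subtract the two representations and insert the term $h(A+\delta)(1+A)^2x$, obtaining
\[
g(A+\delta)x-g(A)x=h(A+\delta)\bigl[2\delta(1+A)+\delta^2\bigr]x+\bigl[h(A+\delta)-h(A)\bigr](1+A)^2x.
\]
For the first term, $\|h(A+\delta)\|$ is bounded uniformly in $\delta$ while $[2\delta(1+A)+\delta^2]x\to 0$ in $X$ as $\delta\to0+$ (here I use $x\in{\rm dom}\,(A)$), so it vanishes in the limit. For the second term, putting $y:=(1+A)^2x\in X$ gives $[h(A+\delta)-h(A)]y=\int_0^\infty(e^{-\delta s}-1)e^{-sA}y\,\nu(ds)$; since $\|(e^{-\delta s}-1)e^{-sA}y\|\le 2M\|y\|$, the integrand tends to $0$ pointwise, and $\nu$ is finite, so dominated convergence for Bochner integrals forces this term to $0$ as well. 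Combining the two estimates proves \eqref{limit}.

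The point that requires care—and essentially the only real obstacle—is that $g(A)$ and $g(A+\delta)$ are genuinely unbounded, so one cannot merely invoke strong convergence of a bounded functional calculus. The factorization through the fixed regularizer is precisely what isolates the unbounded part $(1+A)^2$ onto the fixed vector $x$ (where it depends continuously on $\delta$) and leaves a uniformly bounded, strongly convergent family $h(A+\delta)$; verifying that the \emph{same} regularizer $e$ and the \emph{same} product rule apply simultaneously to all the operators $A+\delta$ is the step I would check most carefully.
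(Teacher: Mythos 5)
Your proof is correct and follows essentially the same route as the paper's: both factor through the regularizer $1/(1+z)^2$, writing $g(A+\delta)x=h(A+\delta)(1+\delta+A)^{2}x$ with $h=g/(1+z)^2\in{\rm A}^1_+(\mathbb C_+)$, and then combine the convergence $(1+\delta+A)^{2}x\to(1+A)^{2}x$ with the convergence of $h(A+\delta)$ to $h(A)$. The only cosmetic difference is that you establish the latter as strong convergence by dominated convergence for the Bochner integral, whereas the paper asserts norm convergence of the shifted symbols in ${\rm A}^1_+(\mathbb C_+)$ (which rests on the same dominated convergence applied to the total variation norm).
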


\begin{proof}
Recall that by Proposition \ref{regS}  ${\rm dom}\,(A^2)$ is a core for $g(A+\delta),$ $\delta\ge 0$.
 Let $x\in {\rm dom}\,(A^2).$ Then for every $\delta >0$ there exists $y_\delta \in X$
such that $x=(A+1+\delta)^{-2}y_\delta$.
Note that
\[
g(A+\delta)x=g(A+\delta)(A+1+\delta)^{-2}y_\delta=
[g(z+\delta)\cdot (z+1+\delta)^{-2}](A)y_\delta.
\]
Moreover $g(z)/(z+1)^2\in {\rm A}^1_+(\mathbb C_+)$ and
hence
\[
\lim_{\delta\to 0+}\,\frac{g(\cdot+\delta)}{(\cdot+1+\delta)^2}=
\frac{g(\cdot)}{(\cdot+1)^2}.
\]
in the Banach algebra ${\rm A}^1_+(\mathbb C_+).$
Since
\[
\lim_{\delta\to 0+}\,y_\delta = \lim_{\delta \to 0+} (A+1+\delta)^{2}x=(A+1)^2x,
\]
we have
\begin{align*}
\lim_{\delta\to 0+}\,g(A+\delta)x=&
[g(z)\cdot(z+1)^{-2}](A)(A+1)^2x\\
=&g(A)(A+1)^{-2}(A+1)^2x\\
=&g(A)x.
\end{align*}
\end{proof}

Theorem \ref{operS2} allows us to adopt Theorem \ref{th1} to the case when the range of the generator may not be dense.

\begin{corollary}\label{CorM0}
Let $-A$ be the generator
of a  $C_0$-semigroup
$(e^{-tA})_{t\ge 0}$ on a Banach space $X$. Assume that
\[
M:=\sup_{t\ge 0}\,\|e^{-tA}\|\,<\infty.
\]
If  $g=1/f$, where $f\in \tilde{\mathcal {S}}_2, f \neq0$,
then for every $x\in {\rm dom}\,(g(A)),$
\[
\|\Delta_{n,t}(A)x\|
\le\,  12 M\frac{\|g(A)x\|}{g(\sqrt{n}/t)},\qquad n\in\mathbb N,\quad t>0.
\]
In particular, if
$g(z)=z^{\alpha}$, $\alpha\in (0,2]$,
then  (\ref{Main2}) holds.
\end{corollary}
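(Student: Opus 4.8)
The plan is to reduce the statement to Theorem \ref{th1} by a perturbation argument, using the shifted generators $-(A+\delta)$ whose ranges are automatically dense. The point is that even if $\operatorname{ran}(A)$ is not dense (so $A$ need not be injective), for every $\delta>0$ the operator $A+\delta$ is boundedly invertible, hence $\overline{\operatorname{ran}}(A+\delta)=X$, and $-(A+\delta)$ generates the bounded semigroup $(e^{-\delta t}e^{-tA})_{t\ge 0}$ with the same bound $M$. Thus Theorem \ref{th1} applies verbatim to $A+\delta$.

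First I would fix $x\in\operatorname{dom}(g(A))$ and write $y_\delta:=g(A+\delta)x$, which makes sense by Proposition \ref{regS}(ii). Applying the estimate \eqref{Main1} (in the form \eqref{Main0}) to the generator $-(A+\delta)$ and the function $f=1/g\in\tilde{\mathcal S}_2\subset\mathcal S_2$, I obtain, for all $n\in\mathbb N$ and $t>0$,
\[
\|\Delta_{n,t}(A+\delta)\,x\|\le 12M\,\|g(A+\delta)x\|\,f(\sqrt n/t)
=\frac{12M\,\|g(A+\delta)x\|}{g(\sqrt n/t)}.
\]
Here I use that $\Delta_{n,t}(A+\delta)=E_{n,t}(A+\delta)-e^{-t(A+\delta)}$ is the same functional-calculus expression $\Delta_{n,t}$ evaluated at $A+\delta$, and that $f(\sqrt n/t)=1/g(\sqrt n/t)$ since $g=1/f$.

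The remaining step is to let $\delta\to 0+$ on both sides. For the right-hand side, Theorem \ref{operS2} gives $g(A+\delta)x\to g(A)x$ precisely on the core $\operatorname{dom}(A^2)$; to cover all $x\in\operatorname{dom}(g(A))$ I would first establish the estimate on $\operatorname{dom}(A^2)$ and then extend by density, using that $\Delta_{n,t}(A)$ is bounded (by \eqref{deltaA} and \eqref{hillestimate}) and that $\operatorname{dom}(A^2)$ is a core for $g(A)$, so both $x_k\to x$ and $g(A)x_k\to g(A)x$ can be arranged. For the left-hand side I need $\Delta_{n,t}(A+\delta)x\to\Delta_{n,t}(A)x$; this follows because $\Delta_{n,t}\in{\rm A}^1_+(\mathbb C_+)$ is a fixed HP-calculus symbol and $\delta\mapsto\Delta_{n,t}(\cdot+\delta)$ is continuous in the algebra ${\rm A}^1_+(\mathbb C_+)$ (the semigroup bound being uniform in $\delta$), exactly as in the argument of Theorem \ref{operS2}. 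The final assertion for $g(z)=z^\alpha$, $\alpha\in(0,2]$, is immediate: $z^{-\alpha}=z^{-\alpha/2}\cdot z^{-\alpha/2}$ is a product of two Stieltjes functions, so $z^{-\alpha}\in\tilde{\mathcal S}_2$, and then $g(A)x=A^\alpha x$ together with $g(\sqrt n/t)=(\sqrt n/t)^\alpha$ recovers \eqref{Main2}.

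The main obstacle I anticipate is the interchange of the two limits at $\delta\to 0+$ with the roles of $x$ and $g(A)x$: the convergence $g(A+\delta)x\to g(A)x$ in Theorem \ref{operS2} is guaranteed only on $\operatorname{dom}(A^2)$, not on all of $\operatorname{dom}(g(A))$, so the density/core argument bridging these two domains must be carried out carefully, making sure that the uniform-in-$\delta$ bound on $\|\Delta_{n,t}(A+\delta)\|$ lets me pass to the limit before invoking density on the larger domain.
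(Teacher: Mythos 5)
Your proposal is correct and follows essentially the same route as the paper: regularize by the shift $\delta>0$ so that $A+\delta$ has dense range, apply Theorem \ref{th1} there, let $\delta\to 0+$ via Theorem \ref{operS2}, and finish with the core/density argument on ${\rm dom}\,(A^2)$. The only (harmless) deviation is that you shift the operator inside $\Delta_{n,t}$ as well, so you need the extra limit $\Delta_{n,t}(A+\delta)x\to\Delta_{n,t}(A)x$, whereas the paper shifts only the function, writing $x=f_\delta(A)g_\delta(A)x$ and bounding $\|(\Delta_{n,t}f_\delta)(A)\|\le 12M f_\delta(\sqrt n/t)\le 12M f(\sqrt n/t)$ directly, which keeps $\Delta_{n,t}(A)x$ fixed throughout; your justification of that extra limit (continuity of $\delta\mapsto\Delta_{n,t}(\cdot+\delta)$ in ${\rm A}^1_+(\mathbb C_+)$) is sound.
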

\begin{proof}

Note that for any $\delta>0$
one has  ${\rm ran}\,(A+\delta)=X$. Thus
$f(A+\delta )$  is well defined  and bounded on $X$, moreover
\[
f(\delta+A)=f_\delta(A),
\]
where
$
f_\delta(z):=f(z+\delta), z>0.
$
For $f\in \tilde{\mathcal{S}}_2, f \neq 0,$ define
\[
g(z):=\frac{1}{f(z)},\;\;g_\delta(z):=\frac{1}{f_\delta(z)},\;\;\delta>0.
\]
Then, by the product rule \eqref{hpfc.e.prod},
\begin{equation}
f_\delta(A)g_\delta(A)x= g_\delta(A)f_\delta(A)x=x,\quad x\in {\rm dom}\,(A^2).
\label{inverse}
\end{equation}

From Theorem \ref{th1} and (\ref{inverse}) it follows that if $\delta>0$ and $x\in {\rm dom}\,(A^2)$
then for any $n\in\mathbb N$ and $t>0,$
\[
\|\Delta_{n,t}(A)x\|
\le\,  12 M
f_\delta(\sqrt{n}/t)
\|g_\delta(A)x\| \le 12 M f(\sqrt{n}/t)\|g_\delta(A)x\|.
\]
Let $\delta\to 0+$ in the above inequality.
Since ${\rm dom}\,(A^2)$ is core for $g(A)$,
\eqref{limit} implies the  statement.
\end{proof}

We finish this section with the estimate of convergence rate in Euler's formula for Komatsu's spaces $D_\infty^\alpha$ (defined in Introduction).

\begin{theorem}\label{ThmInt}
Let $-A$ be the generator of a $C_0$-semigroup $(e^{-tA})_{t\ge 0}$ on a Banach space $X.$
Suppose that
\[
M:=\sup_{t\ge 0}\,\|e^{-tA}\|\,<\infty.
\]
For any $\alpha\in (0,2]$ and $x\in D_\infty^\alpha$,
\begin{equation}\label{ApSt}
\|\Delta_{n,t}(A)x\|
\le\,  8 M\left(\frac{t}{\sqrt{n}}\right)^\alpha \|x\|_{D_\infty^\alpha},\qquad n\in\mathbb N,\quad t>0.
\end{equation}
\end{theorem}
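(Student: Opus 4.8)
The plan is to reduce the statement to Example \ref{Ex1}c) by a single, well-chosen splitting, so that no interpolation theory is needed. The scale is forced on us by Theorem \ref{HF}: the relevant parameter is $\lambda=\sqrt{n}/t$. Accordingly, I would fix $n\in\mathbb N$ and $t>0$, set $\lambda:=\sqrt{n}/t$, and recall from Example \ref{Ex1}c) (taking $f_3(z)=(1+\lambda/z)^2=:\phi_\lambda(z)$) that $\Delta_{n,t}\phi_\lambda\in{\rm A}^1_+(\mathbb C_+)$ with
\[
\|\Delta_{n,t}\phi_\lambda\|_{{\rm A}^1_+(\mathbb C_+)}\le 2\Bigl(1+\tfrac{\lambda t}{\sqrt n}\Bigr)^2 .
\]
The point of the choice $\lambda=\sqrt n/t$ is that then $\lambda t/\sqrt n=1$, so the right-hand side is exactly $8$, which is what produces the constant $8M$ in \eqref{ApSt}.

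The key algebraic observation is the factorization $\phi_\lambda(z)\cdot z^2/(z+\lambda)^2=1$, together with the fact that the second factor is itself a member of the algebra: since $\lambda/(z+\lambda)=\mathcal L[\lambda e^{-\lambda s}]$, we have $z^2/(z+\lambda)^2=(1-\lambda/(z+\lambda))^2\in{\rm A}^1_+(\mathbb C_+)$, and its image under the Hille--Phillips calculus is $[z^2/(z+\lambda)^2](A)=[A(A+\lambda)^{-1}]^2$. Because \emph{both} $\Delta_{n,t}\phi_\lambda$ and $z^2/(z+\lambda)^2$ lie in ${\rm A}^1_+(\mathbb C_+)$, the multiplicativity of the ordinary HP-calculus gives the bounded-operator identity
\[
[\Delta_{n,t}\phi_\lambda](A)\,[A(A+\lambda)^{-1}]^2=\Delta_{n,t}(A),
\]
which holds because $\Delta_{n,t}\phi_\lambda\cdot z^2/(z+\lambda)^2=\Delta_{n,t}$. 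Applying this to $x$ and invoking \eqref{hillestimate} yields
\[
\|\Delta_{n,t}(A)x\|\le M\,\|\Delta_{n,t}\phi_\lambda\|_{{\rm A}^1_+(\mathbb C_+)}\,\|[A(A+\lambda)^{-1}]^2x\|
\le 8M\,\|[A(A+\lambda)^{-1}]^2x\|.
\]

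To finish, I would read off from the definition \eqref{norm} of $\|\cdot\|_{D_\infty^\alpha}$ the bound $\lambda^\alpha\|[A(A+\lambda)^{-1}]^2x\|\le\|x\|_{D_\infty^\alpha}$, i.e. $\|[A(A+\lambda)^{-1}]^2x\|\le\lambda^{-\alpha}\|x\|_{D_\infty^\alpha}$, and substitute $\lambda^{-\alpha}=(t/\sqrt n)^\alpha$ to obtain \eqref{ApSt} with constant $8M$. I expect the only delicate point to be the justification of the displayed operator identity: one must verify that both factors genuinely belong to ${\rm A}^1_+(\mathbb C_+)$ so that the plain HP-calculus (and no recourse to the extended calculus, to injectivity of $A$, or to density of ${\rm ran}\,(A)$) is required. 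This is exactly what allows the theorem to hold under the sole hypothesis $M<\infty$; the remaining steps are direct substitutions, with the scaling $\lambda=\sqrt n/t$ doing all the quantitative work.
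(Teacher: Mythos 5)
Your proof is correct and follows essentially the same route as the paper: the same function $f_\lambda(z)=(1+\lambda/z)^2$, the same appeal to Example \ref{Ex1} c), the same choice $\lambda=\sqrt n/t$ (turning the factor $2(1+\lambda t/\sqrt n)^2$ into $8$), and the same final estimate via the seminorm in $\|\cdot\|_{D_\infty^\alpha}$. The one genuine difference is how the intermediate inequality $\|\Delta_{n,t}(A)x\|\le M\,\|\Delta_{n,t}f_\lambda\|_{{\rm A}^1_+(\mathbb C_+)}\,\|[A(A+\lambda)^{-1}]^2x\|$ is justified: you obtain it directly inside the plain Hille--Phillips calculus, observing that both factors in $\Delta_{n,t}=\bigl(\Delta_{n,t}f_\lambda\bigr)\cdot z^2/(z+\lambda)^2$ belong to ${\rm A}^1_+(\mathbb C_+)$, whereas the paper cites Corollary \ref{CorM0}, whose proof goes through the extended calculus, the regularization machinery and the $\delta\to0+$ limit of Theorem \ref{operS2}. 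Your variant is more self-contained for this particular statement and makes transparent why no injectivity or dense-range hypothesis on $A$ is needed; the paper's variant is just a specialization of its general framework, so the two arguments coincide in substance.
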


\begin{proof}
For fixed $\lambda>0$ define
\[
f_\lambda(z):=\left(1+\frac{\lambda}{z}\right)^2,\quad
g_\lambda(z):=1/f_\lambda(z)=\left(\frac{z}{\lambda+z}\right)^2, \quad z >0.
\]
Note that $f_\lambda \in \tilde{\mathcal{S}}_2.$ Using Corollary \ref{CorM0} with
$g=g_\lambda$ and taking into account Example \ref{Ex1}, c),
we have for any $x\in X$ and $n \in \mathbb N$
\begin{align}\label{App1}
\|\Delta_{n,t}(A)x\|
\le&\,  2 M \left(1+\frac{\lambda t}{\sqrt{n}}\right)^2\|[A (A+\lambda)^{-1}]^2 x\|\\
=& 2 M \lambda^\alpha \|[A (A+\lambda)^{-1}]^2 x\|\frac{(\lambda t/\sqrt{n}+1)^2}{\lambda^\alpha},
\quad  t >0.\notag
\end{align}
Setting now $\lambda=\sqrt{n}/t$
in \eqref{App1} it follows that
\[
\|\Delta_{n,t}(A)x\|
\le 8 M \left(\frac{t}{\sqrt{n}}\right)^\alpha \sup_{s>0}\,
\bigl(s^\alpha \|[A (A+s)^{-1}]^2 x\|\bigr),\quad x\in D_\infty^\alpha,
\]
and \eqref{ApSt} holds.

\end{proof}

\section{Optimality of rates}

In this section we show that our estimates for convergence rates in Euler's approximation formula are in a sense optimal.
We will need the next estimate
for Stieltjes functions proved independently in many papers. It seems the earliest reference is \cite[Lemma 2]{Pust}.
\begin{lemma}\label{S}
If $f\in \mathcal{S},$  then
\begin{equation}
\label{ineqf2}
f(s)\le \sqrt{2} |f(\pm i s)|,\quad s>0.
\end{equation}
\end{lemma}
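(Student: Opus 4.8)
The plan is to use the integral representation of a Stieltjes function and compare the real positive axis with the imaginary axis directly. Recall that a Stieltjes function $f \in \mathcal{S}$ admits the representation
\[
f(z) = a + \int_0^\infty \frac{\nu(d\tau)}{z+\tau}, \qquad z \in \mathbb{C} \setminus (-\infty,0],
\]
with $a \ge 0$ and a positive Radon measure $\nu$ satisfying the appropriate integrability condition. First I would evaluate $f$ at the purely imaginary points $\pm i s$ for $s > 0$. Since $f(-is) = \overline{f(is)}$ (the measure and constant are real), it suffices to work with one of them, say $f(is)$, and the two quantities $|f(is)|$ and $|f(-is)|$ coincide.

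Next I would compute, for each fixed $\tau \ge 0$ and $s > 0$, the modulus of the kernel on the imaginary axis:
\[
\left|\frac{1}{is+\tau}\right| = \frac{1}{\sqrt{s^2+\tau^2}},
\]
whereas on the positive axis the kernel is $\frac{1}{s+\tau}$. The core comparison is therefore the pointwise inequality
\[
\frac{1}{s+\tau} \le \frac{\sqrt{2}}{\sqrt{s^2+\tau^2}}, \qquad s>0,\ \tau \ge 0,
\]
which is equivalent to $(s+\tau)^2 \ge \tfrac{1}{2}(s^2+\tau^2)$, i.e. to $\tfrac{1}{2}(s-\tau)^2 + 2 s\tau \ge 0$, and this holds trivially. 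The constant term contributes $a \le \sqrt 2\, a$ with room to spare, and the same factor $\sqrt 2$ dominates the constant part of $|f(is)|$ after the triangle inequality is applied below.

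The final step is to pass from the pointwise kernel estimate back to $f$. Using the representation and the pointwise bound, I would write
\[
f(s) = a + \int_0^\infty \frac{\nu(d\tau)}{s+\tau}
\le \sqrt{2}\left( a + \int_0^\infty \frac{\nu(d\tau)}{\sqrt{s^2+\tau^2}}\right)
= \sqrt{2}\int_0^\infty \left|\frac{1}{is+\tau}\right|\nu(d\tau) + \sqrt 2\, a,
\]
and then control the right-hand side by $\sqrt{2}\,|f(is)|$. The one genuine subtlety — and the step I expect to be the main obstacle — is the last inequality: bounding $\int \frac{\nu(d\tau)}{\sqrt{s^2+\tau^2}}$ (a sum of the moduli of the individual kernel values) by the modulus of the integral $\bigl|\int \frac{\nu(d\tau)}{is+\tau}\bigr|$, rather than the reverse, which the triangle inequality would give for free. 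This works here only because all the kernels $\frac{1}{is+\tau} = \frac{\tau - is}{s^2+\tau^2}$ lie in the lower half-plane with a common sign of imaginary part, so that no cancellation in the sum is possible and $\operatorname{Im}$ (or a rotation aligning the arguments) recovers the sum of moduli up to the stated constant. I would make this precise by noting that each term has argument in a fixed quarter-plane, so that the vector sum has modulus comparable to the scalar sum of moduli, with the comparison constant absorbed into the $\sqrt 2$; the elementary inequality on the kernels established above is exactly what makes this bookkeeping close with the sharp constant $\sqrt 2$.
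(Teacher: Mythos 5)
The paper offers no proof of this lemma (it simply cites Pustyl'nik), so your attempt has to stand on its own. Your overall strategy --- integral representation, pointwise comparison of the kernel on $(0,\infty)$ with the kernel on $i\mathbb R$, then the observation that all the values $1/(is+\tau)$ lie in a common quarter-plane so no cancellation occurs --- is the standard route. But as written the argument spends the factor $\sqrt 2$ twice and therefore only proves $f(s)\le 2\,|f(\pm is)|$, not the stated inequality. The first $\sqrt2$ is paid in your pointwise bound $\tfrac{1}{s+\tau}\le \tfrac{\sqrt2}{\sqrt{s^2+\tau^2}}$. The second is hidden in your final step: the inequality $a+\int_0^\infty (s^2+\tau^2)^{-1/2}\nu(d\tau)\le |f(is)|$ is simply false with constant $1$ (take $f(z)=a+b/z$ with $a=b/s$: the left side equals $2a$ while $|f(is)|=\sqrt{a^2+b^2/s^2}=a\sqrt2$), and the best constant available for complex numbers confined to a quarter-plane is again exactly $\sqrt2$, via $|w|\le \operatorname{Re}w+|\operatorname{Im}w|$ termwise followed by $x+y\le\sqrt2\sqrt{x^2+y^2}$. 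These two losses multiply; they cannot be ``absorbed'' into one another, and the same example $f(z)=a+b/z$, $a=b/s$, shows the lemma's constant $\sqrt2$ is attained, so no factorization through the scalar sum of moduli $\int|1/(is+\tau)|\,\nu(d\tau)$ can recover it.

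The repair is short and gives the sharp constant: the pointwise comparison in fact costs nothing, since $(s+\tau)^2\ge s^2+\tau^2$ yields
\[
\frac{1}{s+\tau}\;\le\;\frac{s+\tau}{s^2+\tau^2}\;=\;\operatorname{Re}\frac{1}{is+\tau}+\Bigl|\operatorname{Im}\frac{1}{is+\tau}\Bigr| .
\]
Integrating against $\nu$ and adding $a\ge 0$ gives $f(s)\le \operatorname{Re}f(is)+|{\operatorname{Im}f(is)}|$, because $\operatorname{Re}\frac{1}{is+\tau}\ge 0$ and $\operatorname{Im}\frac{1}{is+\tau}\le 0$ for every $\tau\ge0$, so real and imaginary parts integrate without cancellation. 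A single application of $x+y\le\sqrt2\sqrt{x^2+y^2}$ for $x,y\ge0$ then gives $f(s)\le\sqrt2\,|f(is)|=\sqrt2\,|f(-is)|$, the last equality since $f(-is)=\overline{f(is)}$, as you noted.
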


It will also be convenient to single out an auxiliary inequality involving  $\Delta_{n,t}.$
\begin{lemma}\label{log}
For any $n \in \mathbb N$ and $t>0,$
\begin{equation}\label{delta}
|\Delta_{n,t}(\pm i \sqrt{n}/t)|\ge 1-\frac{1}{\sqrt{2}}.
\end{equation}
\end{lemma}
\begin{proof}
For any $n \in \mathbb N$ and $t>0,$
\begin{align*}
|\Delta_{n,t}(\pm i \sqrt{n}/t)|
=\left|\frac{1}{(1+i/\sqrt{n})^n}-e^{-i\sqrt{n}}\right|
\ge 1-|1+i/\sqrt{n}|^{-n}.
\end{align*}
Since
\[
\log(1+t)\ge t\log 2,\quad t\in [0,1],
\]
it follows that
\[
|1+i/\sqrt{n}|^n= e^{(n/2)\log (1+1/n)}\ge
e^{(1/2)\log 2}=\sqrt{2},\quad n \in \mathbb N,
\]
and this yields \eqref{delta}.
\end{proof}

The result below shows that Theorem \ref{th1} and Corollary \ref{CorM0} are sharp if the spectrum of the generator is large enough.
\begin{theorem}\label{sharp}
Let  $-A$ be the generator
of a bounded $C_0$-semigroup
$(e^{-tA})_{t\ge 0}$ on a Banach space $X$.
Suppose that $\overline{{\rm ran}}\, (A)=X$ and
\begin{equation}\label{spectrum}
\{|s|:\,s\in \mathbb R,\,is\in \sigma(A)\}=\mathbb R_{+}.
\end{equation}
If $f\in \tilde{\mathcal{S}}_2,$  then
\begin{equation}
\| f(A)\Delta_{n,t}(A)\|\ge c f(\sqrt{n}/t),\;\;n\in\mathbb N,\;\;t>0,\;\;\;
c=\frac{1}{2}\;\left(1-\frac{1}{\sqrt{2}}\right).
\label{EstS1}
\end{equation}
In particular,
\[
\|A^\alpha \Delta_{n,t}(A)\|\ge c
(t/\sqrt{n})^\alpha,\;\;n\in\mathbb N,\;\;t>0,
\]
for any $\alpha\in (0,2].$
\end{theorem}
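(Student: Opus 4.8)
The goal is to show that the upper bound in Theorem \ref{th1} cannot be improved in operator norm, and the natural strategy is to exploit the spectral hypothesis \eqref{spectrum} to produce, for each $n$ and $t$, approximate eigenvectors of $A$ at the imaginary frequency $\pm is$ with $s = \sqrt{n}/t$, at which both $\Delta_{n,t}$ and $f$ are controlled from below. The plan is to estimate $\|f(A)\Delta_{n,t}(A)\|$ from below by evaluating the symbol $f(z)\Delta_{n,t}(z)$ along the imaginary axis at the point $z = \pm i\sqrt{n}/t$, and to convert this pointwise symbol estimate into an operator-norm estimate via the spectral mapping / approximate-eigenvector correspondence guaranteed by \eqref{spectrum}.

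First I would fix $n\in\mathbb N$ and $t>0$ and set $s = \sqrt{n}/t$, so that by \eqref{spectrum} we have $\pm is \in \sigma(A)$. For $f \in \tilde{\mathcal S}_2$ we may write $f = f_0^2$ with $f_0 \in \mathcal S$ (as recorded after the definition of $\tilde{\mathcal S}_2$). The symbol $f\,\Delta_{n,t}$ is bounded and holomorphic on $\mathbb C_+$, and its boundary values on $i\mathbb R$ are well defined; the standard principle that for a function $\Phi$ in the (extended) Hille--Phillips calculus one has $\|\Phi(A)\| \ge |\Phi(i\sigma)|$ whenever $i\sigma \in \sigma(A)$, realized through approximate eigenvectors $x_k$ with $\|x_k\|=1$ and $\|(A - i\sigma)x_k\| \to 0$, yields
\begin{equation*}
\|f(A)\Delta_{n,t}(A)\| \ge |f(is)\,\Delta_{n,t}(is)|
\quad\text{and likewise with } -is.
\end{equation*}
The next step is to bound the right-hand side below. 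For the $\Delta_{n,t}$ factor I would invoke Lemma \ref{log}, which gives $|\Delta_{n,t}(\pm i\sqrt{n}/t)| \ge 1 - 1/\sqrt{2}$. For the $f$ factor I would use Lemma \ref{S} applied to $f_0 \in \mathcal S$: since $f = f_0^2$,
\begin{equation*}
|f(\pm is)| = |f_0(\pm is)|^2 \ge \tfrac{1}{2}\, f_0(s)^2 = \tfrac{1}{2}\, f(s).
\end{equation*}
Combining these two lower bounds gives $\|f(A)\Delta_{n,t}(A)\| \ge \tfrac12 (1 - 1/\sqrt2) f(\sqrt{n}/t)$, which is exactly \eqref{EstS1} with the stated constant $c$. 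The final assertion for $f(z) = z^{-\alpha}$ with $\alpha \in (0,2]$ follows since $z^{\alpha} = 1/f(z)$ and $f(\sqrt n/t) = (t/\sqrt n)^{\alpha}$, so that $\|A^\alpha \Delta_{n,t}(A)\| \ge c\,(t/\sqrt n)^\alpha$.

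The main obstacle is the passage from the pointwise symbol inequality to the operator-norm lower bound: one must justify that $i\sigma \in \sigma(A)$ together with boundedness of $A$ implies the existence of unit approximate eigenvectors on which $f(A)\Delta_{n,t}(A)$ acts asymptotically as multiplication by $f(i\sigma)\Delta_{n,t}(i\sigma)$. Since $f\,\Delta_{n,t}$ need not lie in ${\rm A}^1_+(\mathbb C_+)$ outright, care is needed: I would work through a regularizer, representing $f(A)\Delta_{n,t}(A)$ via $e(A)^{-1}(ef\,\Delta_{n,t})(A)$ with $e(z) = z^2/(1+z)^2$ as in Proposition \ref{regS}(i), and note that approximate eigenvectors for $A$ at $i\sigma$ are simultaneously approximate eigenvectors for each element of the HP-calculus, with the scalar action converging to the symbol value at $i\sigma$; the injectivity of $e(A)$ (which holds since $\overline{{\rm ran}}\,(A) = X$) then lets one recover the value $f(i\sigma)\Delta_{n,t}(i\sigma)$ in the limit. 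The remaining steps are routine applications of the two preparatory lemmas.
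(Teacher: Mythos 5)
Your argument is correct and is essentially the paper's own proof: the paper likewise reduces \eqref{EstS1} to the pointwise lower bound for $|(f\cdot\Delta_{n,t})(\pm i\sqrt{n}/t)|$ via the spectral inclusion theorem for the Hille--Phillips calculus (simply citing \cite[Theorem 16.3.5]{HilPhi} and \cite[Theorem 2.2]{GHT} instead of re-deriving it through approximate eigenvectors and the regularizer, as you sketch), and then combines Lemma \ref{log} with Lemma \ref{S} applied to $f_0\in\mathcal{S}$, $f=f_0^2$, exactly as you do. Two cosmetic slips: $A$ itself need not be bounded (only the semigroup is, and approximate eigenvectors at $\pm i\sqrt{n}/t$ exist because these are boundary points of $\sigma(A)\subset\overline{\mathbb C_+}$), and \eqref{spectrum} only guarantees that one of $\pm i\sqrt{n}/t$ lies in $\sigma(A)$, which is why the paper treats the two signs as separate cases --- your ``and likewise with $-is$'' covers this.
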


\begin{proof}
Let $n \in \mathbb N$ and $t>0$ be such that
$i\sqrt{n}/t\in\sigma(A).$ By the spectral inclusion theorem for
the Hille-Phillips functional calculus (see e.g. \cite[Theorem
16.3.5]{HilPhi} or \cite[Theorem 2.2]{GHT}) we obtain for every
$t>0:$
\begin{align}
\| f(A)\Delta_{n,t}(A)\|=&
\|(f\cdot \Delta_{n,t})(A)\|
\ge \sup_{\lambda \in \sigma(A)}\,|(f\cdot \Delta_{n,t})(\lambda)|\label{ShA}\\
\ge&
|(f\cdot \Delta_{n,t})(i \sqrt{n}/t)|=
|\Delta_{n,t}(i \sqrt{n}/t)|\cdot |f(i\sqrt{n}/t)|.\notag
\end{align}
Then, by Lemmas \ref{S} and \ref{log}, \eqref{ShA}
implies  \eqref{EstS1}.

If $-i\sqrt{n}/t\in\sigma(A)$ then, by Lemmas \ref{S} and  \ref{log}, the argument completely analogous to the above gives the same estimate \eqref{EstS1}.
\end{proof}

The assumptions of Theorem \ref{sharp} can trivially be satisfied as the next simple example shows.
\begin{example}\label{sam}
Let $X=L^2(\mathbb R_+).$
Define
\[
(Au)(s):=is u(s),\qquad u\in L^2(\mathbb R_+),
\]
with the maximal domain.
Then $-A$ generates a  $C_0$-semigroup $(e^{-At})_{t \ge 0}$ given by
$(e^{-At} u)(s)=e^{-is t}u(s),$ $t \ge 0,$
on $X,$ and
$\sigma(A)=i\mathbb R_+$. Thus, $A$ satisfies the conditions
of  Theorem \ref{sharp}.
\end{example}

The following statement complementing Theorem \ref{sharp} can be proved in the same way as Theorem \ref{sharp}.

\begin{theorem}\label{sharp1}
Let $-A$ be the generator
of a bounded $C_0$-semigroup
$(e^{-tA})_{t\ge 0}$ on  a Banach space $X$.
Suppose that $\overline{{\rm ran}}\,(A)=X$ and
$
\sigma(A)\cap  i\mathbb R
$
has an accumulation point at infinity.
If $f\in \tilde{\mathcal {S}}_2,$  then
\[
\limsup_{\sqrt{n}/t\to\infty}\,\frac{\|f(A)\Delta_{n,t}(A)\|}{f(\sqrt{n}/t)}>0.
\]
\end{theorem}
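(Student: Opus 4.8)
The plan is to mimic the proof of Theorem \ref{sharp} almost verbatim, weakening only the way spectral points on $i\mathbb{R}$ are harvested. Since $\sigma(A)\cap i\mathbb{R}$ accumulates at infinity, I would first fix a sequence $(s_k)_{k\ge 1}\subset\mathbb{R}$ with $|s_k|\to\infty$ and $is_k\in\sigma(A)$ for every $k$. Writing $f=f_0^2$ with $f_0\in\mathcal{S}$ (possible since $f\in\tilde{\mathcal{S}}_2$), the two ingredients Lemma \ref{S} and Lemma \ref{log} will supply everything else.

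For each $k$ I would choose $n_k\in\mathbb{N}$ and $t_k>0$ so that $\sqrt{n_k}/t_k=|s_k|$; the simplest admissible choice is $n_k=1$, $t_k=1/|s_k|$. With this choice $is_k=\pm i\sqrt{n_k}/t_k$ (the sign being that of $s_k$), and $\sqrt{n_k}/t_k=|s_k|\to\infty$ as $k\to\infty$. Exactly as in \eqref{ShA}, the spectral inclusion theorem for the Hille--Phillips calculus gives
\[
\|f(A)\Delta_{n_k,t_k}(A)\|=\|(f\cdot\Delta_{n_k,t_k})(A)\|\ge|(f\cdot\Delta_{n_k,t_k})(is_k)|=|\Delta_{n_k,t_k}(is_k)|\,|f(is_k)|.
\]
It then remains to bound the two factors from below. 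By Lemma \ref{log} the first factor satisfies $|\Delta_{n_k,t_k}(\pm i\sqrt{n_k}/t_k)|\ge 1-1/\sqrt{2}$. For the second, Lemma \ref{S} applied to $f_0$ with $s=|s_k|$ (the $\pm$ in \eqref{ineqf2} covering both signs of $s_k$) yields $|f_0(is_k)|\ge f_0(|s_k|)/\sqrt{2}$, whence $|f(is_k)|=|f_0(is_k)|^2\ge f_0(|s_k|)^2/2=f(|s_k|)/2=f(\sqrt{n_k}/t_k)/2$. Combining these,
\[
\frac{\|f(A)\Delta_{n_k,t_k}(A)\|}{f(\sqrt{n_k}/t_k)}\ge \frac{1}{2}\Bigl(1-\frac{1}{\sqrt{2}}\Bigr),\qquad k\in\mathbb{N}.
\]
Since $\sqrt{n_k}/t_k\to\infty$, this exhibits a sequence along which the quotient stays bounded below by the positive constant $c=\tfrac12(1-1/\sqrt{2})$ of \eqref{EstS1}, so that $\limsup_{\sqrt{n}/t\to\infty}\|f(A)\Delta_{n,t}(A)\|/f(\sqrt{n}/t)\ge c>0$.

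The only conceptual point — and it is not really an obstacle — is the difference in how the spectral hypothesis is used: in Theorem \ref{sharp} the full-axis assumption forces $\pm i\sqrt{n}/t\in\sigma(A)$ for \emph{every} pair $(n,t)$, giving a pointwise-in-$(n,t)$ bound, whereas here one has spectral points only along a sequence tending to infinity. The freedom to vary $t$ continuously lets me place $\sqrt{n}/t$ exactly at each $|s_k|$ (even with $n$ frozen at $1$), so the same pointwise estimate survives along a sequence with $\sqrt{n}/t\to\infty$, and passing to the $\limsup$ delivers the claim. No estimate beyond Lemmas \ref{S} and \ref{log} is required.
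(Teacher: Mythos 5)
Your argument is correct and is essentially the paper's intended proof: the paper gives no separate argument for Theorem \ref{sharp1}, stating only that it is proved ``in the same way as Theorem \ref{sharp}'', and your adaptation --- harvesting a sequence $is_k\in\sigma(A)$ with $|s_k|\to\infty$, tuning $(n_k,t_k)$ so that $\sqrt{n_k}/t_k=|s_k|$, and then invoking the spectral inclusion \eqref{ShA} together with Lemmas \ref{S} and \ref{log} --- is exactly the right way to carry that out, yielding the same constant $c=\tfrac12\bigl(1-\tfrac{1}{\sqrt2}\bigr)$ along the chosen sequence.
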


Finally, we show that Theorem \ref{th1} and Corollary \ref{CorM0} are sharp in a slightly weaker sense than in Theorem \ref{sharp}
but with no restriction on the spectrum of the generator.
\begin{corollary}\label{CorSh}
Let $A$  and $f$ satisfy the assumptions Theorem \ref{sharp1}.
Suppose in addition that
\[
\lim_{\tau\to\infty}\, \tau^2 f(\tau)=\infty.
\]
Then, whenever $\epsilon: (0,\infty)\mapsto (0,\infty)$
is a decreasing function
with  $\lim_{\tau\to\infty}\,\epsilon(\tau)=0$, there exists
$y\in {\rm ran}\,(f(A))$ such that
\begin{equation}\label{y0}
\limsup_{\sqrt{n}/t\to\infty}\,
\frac{\| \Delta_{n,t}(A)y\|}{\epsilon(\sqrt{n}/t)f(\sqrt{n}/t)}=\infty.
\end{equation}
\end{corollary}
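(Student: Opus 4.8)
The plan is to derive \eqref{y0} by a condensation-of-singularities (uniform boundedness) argument that upgrades the purely operator-theoretic optimality behind Theorem \ref{sharp1} to a statement about a single vector of ${\rm ran}(f(A))$. Since $\overline{{\rm ran}}(A)=X$, the operator $A$ is injective and hence so is $f(A)$ (recall $f=f_0^2$ with $f_0\in\mathcal{S}$, $f\neq0$); thus ${\rm ran}(f(A))$ carries the norm $\|y\|_W:=\|y\|+\|f(A)^{-1}y\|$, under which $W:=({\rm ran}(f(A)),\|\cdot\|_W)$ is a Banach space isometric to the graph of the closed operator $f(A)$. For $y=f(A)v\in W$ the product rule \eqref{hpfc.e.prod} gives $\Delta_{n,t}(A)y=\Delta_{n,t}(A)f(A)v=(f\Delta_{n,t})(A)v$, so $\Delta_{n,t}(A)$ restricts to a bounded map $W\to X$. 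Writing $\mu:=\sqrt{n}/t$, I introduce the normalized operators $T_{n,t}:=\bigl(\epsilon(\mu)f(\mu)\bigr)^{-1}\Delta_{n,t}(A)\colon W\to X$, which are bounded because $\|\Delta_{n,t}(A)\|\le 2M$ by \eqref{deltaA} and \eqref{hillestimate}. Then \eqref{y0} is exactly the assertion that some $y\in W$ satisfies $\limsup_{\mu\to\infty}\|T_{n,t}y\|=\infty$.

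The heart of the matter is to show that these operator norms blow up along a suitable sequence. By the assumptions of Theorem \ref{sharp1}, $\sigma(A)\cap i\mathbb{R}$ accumulates at infinity, so there are $s_k\to\infty$ with $is_k\in\sigma(A)$; choosing $n_k\in\mathbb{N}$ arbitrarily and $t_k=\sqrt{n_k}/s_k$ I arrange $\mu_k=s_k$. As $-A$ generates a bounded semigroup we have $\sigma(A)\subset\overline{\mathbb{C}_+}$, so each $is_k$ is a boundary point of $\sigma(A)$, hence an approximate eigenvalue: pick $x_{k,m}\in{\rm dom}(A)$, $\|x_{k,m}\|=1$, with $\|(A-is_k)x_{k,m}\|\to0$ as $m\to\infty$. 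Using the regularizer $e(z)=z^2/(1+z)^2$ of Proposition \ref{regS}(i), set $v_{k,m}:=e(A)x_{k,m}\in{\rm dom}(f(A))$ and $y_{k,m}:=f(A)v_{k,m}=(fe)(A)x_{k,m}\in W$, so that $f(A)^{-1}y_{k,m}=v_{k,m}$. Since $\|(e^{-sA}-e^{-is_ks})x_{k,m}\|\le sM\|(A-is_k)x_{k,m}\|\to0$ pointwise in $s$ and is bounded by $2M$, dominated convergence yields $\|g(A)x_{k,m}-g(is_k)x_{k,m}\|\to0$ for every $g\in{\rm A}^1_+(\mathbb{C}_+)$. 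Applying this with $g\in\{e,\;fe,\;\Delta_{n_k,t_k}fe\}$ gives, as $m\to\infty$,
\[
\|v_{k,m}\|\to|e(i\mu_k)|,\quad \|y_{k,m}\|\to|f(i\mu_k)|\,|e(i\mu_k)|,\quad \|\Delta_{n_k,t_k}(A)y_{k,m}\|\to|\Delta_{n_k,t_k}(i\mu_k)|\,|f(i\mu_k)|\,|e(i\mu_k)|.
\]
As $|e(i\mu_k)|=\mu_k^2/(1+\mu_k^2)\to1$, Lemma \ref{log} gives $|\Delta_{n_k,t_k}(i\mu_k)|\ge1-1/\sqrt2$, and Lemma \ref{S} applied to $f_0$ yields $|f(i\mu_k)|\ge f(\mu_k)/2$, the quotient $\|\Delta_{n_k,t_k}(A)y_{k,m}\|/\|y_{k,m}\|_W$ is bounded below, in the limit, by a fixed positive multiple of $f(\mu_k)/(1+|f(i\mu_k)|)$. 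Every $f\in\tilde{\mathcal{S}}_2$ has a finite limit at infinity, so $|f(i\mu_k)|$ stays bounded, and therefore $\|T_{n_k,t_k}\|_{W\to X}\ge c'/\epsilon(\mu_k)\to\infty$.

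With $\sup_k\|T_{n_k,t_k}\|_{W\to X}=\infty$ secured, I invoke condensation of singularities: passing to a subsequence along which these norms tend to infinity and intersecting the residual sets associated with its tails, the set $\{y\in W:\limsup_k\|T_{n_k,t_k}y\|=\infty\}$ is residual in $W$, hence nonempty. Any such $y$ lies in ${\rm ran}(f(A))$, and since $\limsup_{\mu\to\infty}\|T_{n,t}y\|$ dominates the limsup over the chosen sequence, \eqref{y0} follows.

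The step I expect to be the main obstacle is precisely the uniform lower bound $\|T_{n_k,t_k}\|_{W\to X}\to\infty$: one must transfer the operator-norm optimality to test vectors that genuinely lie in ${\rm ran}(f(A))$ while keeping their $W$-norm $\|y\|+\|f(A)^{-1}y\|$ under control, which is what the regularized approximate eigenvectors above accomplish. The additional hypothesis $\lim_{\tau\to\infty}\tau^2 f(\tau)=\infty$ is what excludes the critical decay $f(\tau)\asymp\tau^{-2}$ (equivalently, $\tau f_0(\tau)$ tending to a finite limit), the regime in which ${\rm ran}(f(A))$ collapses to the scale of ${\rm dom}(A^2)$ and the saturation of Euler's formula interferes; in the complementary regime the range is strictly rougher, and I expect the delicate point in any more explicit, gliding-hump realization of this argument to be exactly the use of $\tau^2 f(\tau)\to\infty$ to make the principal hump dominate the low-frequency cross terms $\sum_{j<k}|c_j|\,|f(i\mu_j)|\,|\Delta_{n_k,t_k}(i\mu_j)|$ at the scale $\epsilon(\mu_k)f(\mu_k)$.
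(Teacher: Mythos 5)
Your argument is correct in substance, but it follows a genuinely different route from the paper's. The paper takes Theorem \ref{sharp1} as a black box for the operator-norm blow-up of $f(A)\Delta_{n,t}(A)$ on $X$, transfers it by similarity (via $A^2(I+A)^{-2}$) to the restriction on ${\rm ran}\,(A^2)$ equipped with the ${\rm dom}\,(A^{-2})$-graph norm, applies the uniform boundedness principle there, and then must discard the extra graph-norm term $\|A^{-2}\Delta_{n,t}(A)y\|\le \tfrac{3M}{2}(t/\sqrt{n})^2\|y\|$ coming from Example \ref{Ex1}\,b); it is precisely at this last step that the hypothesis $\tau^2 f(\tau)\to\infty$ is used, by replacing $\epsilon(\tau)$ with $\max\{\epsilon(\tau),(\tau^2f(\tau))^{-1}\}$. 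You instead run the Baire-category argument directly on $W={\rm ran}\,(f(A))$ with the graph norm of $f(A)^{-1}$, and you re-derive the needed lower bound for $\|T_{n_k,t_k}\|_{W\to X}$ from scratch using approximate eigenvectors at the boundary spectral points $is_k$, combined with Lemmas \ref{S} and \ref{log} and the boundedness of $|f(is)|$ for $s\ge 1$. This costs you more groundwork than the paper needs: you must justify that $f(A)$ is injective with closed inverse (so that $W$ is complete), that $\partial\sigma(A)\subseteq\sigma_{ap}(A)$, and the approximate-eigenvalue mapping property $\|g(A)x_m-g(is)x_m\|\to 0$ for $g\in{\rm A}^1_+(\mathbb C_+)$ — a strictly stronger tool than the operator-norm spectral inclusion the paper cites. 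What it buys is notable: your normalization by $\|y\|_W=\|y\|+\|f(A)^{-1}y\|$ makes the denominator cancel against $|f(i\mu_k)|$ directly, so the hypothesis $\lim_{\tau\to\infty}\tau^2f(\tau)=\infty$ never enters your argument; if all the technical steps are filled in, you have in fact proved the conclusion without that hypothesis, which is an artifact of the paper's detour through ${\rm ran}\,(A^2)$.

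One caution: proving a statement while leaving one of its hypotheses unused is always a warning sign, and your closing paragraph (about gliding humps and cross terms) reads as an attempt to rationalize where $\tau^2f(\tau)\to\infty$ "should" appear rather than a place where it actually does. I do not see an error — the key ratio $\|\Delta_{n_k,t_k}(A)y_{k,m}\|/\|y_{k,m}\|_W\to |\Delta_{n_k,t_k}(i\mu_k)|\,|f(i\mu_k)|/(1+|f(i\mu_k)|)$ is bounded below by a constant multiple of $f(\mu_k)$ since $|f(i\mu_k)|\ge f(\mu_k)/2$ is bounded above — but you should state explicitly that your proof yields the stronger, hypothesis-free statement, and delete the speculative final paragraph, which is not part of the argument.
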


\begin{proof}
By the  Theorem \ref{sharp1}
we have
\[
\limsup_{\sqrt{n}/t\to\infty}\,
\frac{\|f(A)\Delta_{n,t}(A)\|}{\epsilon(\sqrt{n}/t)f(\sqrt{n}/t)}=\infty,
\]
Since ${\rm ran}\,(A^2)\subset {\rm dom}\,(f(A))$, the product
rule \eqref{hpfc.e.prod} implies that $f(A)\Delta_{n,t}(A)$ is
similar to its restriction to ${\rm dom}\,(A^{-2})={\rm
ran}\,(A^2)$ by means of the isomorphism $A^2(I+A)^{-2}: X\mapsto
{\rm ran}\,(A^2).$ Then the uniform boundedness principle yields
$x\in {\rm ran}\,(A^2)\subset {\rm dom}\,(f(A))$ such that
\[
\limsup_{\sqrt{n}/t\to\infty}\, \frac{\|
f(A)\Delta_{n,t}(A)x\|_{{\rm
dom}\,(A^{-2})}}{\epsilon(\sqrt{n}/t)f(\sqrt{n}/t)}=\infty.
\]
On the other hand, setting $y=f(A)x,$ and using Example \ref{Ex1}
$b)$, we obtain
\begin{align}
\| f(A)\Delta_{n,t}(A)x\|_{{\rm dom}\,(A^{-2})}
=&\|\Delta_{n,t}(A)f(A)x\|+\|A^{-2}\Delta_{n,t}(A)f(A)x\|
\label{MMM}\\
=&\|\Delta_{n,t}(A)y\|+\|A^{-2}\Delta_{n,t}(A)y\|\notag \\
\le&  \|\Delta_{n,t}(A)y\|+\frac{3 M}{2}
\left(\frac{t}{\sqrt{n}}\right)^2\|y\|.\notag
\end{align}
Since $\tau^2 f(\tau)\to\infty$, $\tau\to\infty$, we may replace
$\epsilon(\tau)$ by $\max\{\epsilon(\tau), (\tau^2
f(\tau))^{-1}\}$, $\tau\ge 1,$ and suppose without loss of
generality that
\[
\beta:=\inf_{\tau\ge 1}\, \epsilon(\tau)\tau^2 f(\tau)>0.
\]
Hence, in view of
\[
\sup_{\sqrt{n}/t\ge
1}\,\frac{1}{\epsilon(\sqrt{n}/t)(\sqrt{n}/t)^2 f(\sqrt{n}/t)}\le
\frac{1}{\beta}<\infty,
\]
the statement follows from \eqref{MMM}.
\end{proof}

\begin{remark}\label{remarkonalpha}
Note that Theorem \ref{HF} does hot hold  for wider classes
$\mathcal{S}_\alpha$, $\alpha>2,$ of generalized Stieltjes
functions. Indeed, note that
\[
\lim_{z\to 0+}\frac{\Delta_{n,t}(z)}{z^2}=\lim_{z\to
0+}\,\frac{r_{n,t}(z)-e_t(z)}{z^2}=\frac{t^2}{2n}\not=0,\quad
\mbox{if}\quad t>0.
\]
Hence if $f\in \mathcal{S}_\alpha$, $\alpha>2$, is such that
$\lim_{z\to 0+}\,z^2f(z)=\infty$, and $\sigma(A)$ has accumulation
point at zero, then $f(A)\Delta_{n,t}(A)$ is not bounded.
(Otherwise, the inequality
$$
\| f(A)\Delta_{n,t}(A)\| \ge \sup_{\lambda \in
\sigma(A)}\,|(f\cdot \Delta_{n,t})(\lambda)|,
$$
leads to a contradiction.) Thus, \eqref{Main0} and \eqref{Main2}
are not true in this case.
\end{remark}

\section{Appendix}

\noindent
{\it Proof of Lemma \ref{LL}.}\\
\noindent It will  be convenient to denote
\begin{equation}
w:=w_{n,t}(s,\tau)=\tau t|1-s/n|. \label{wW}
\end{equation}
Using  \eqref{L2lem}  we obtain
\begin{align*}
&Q_{n,t}^{(2)}(\tau)\\
\le& \int_0^\infty e^{-\tau v}\int_0^{n(v/t+1)} s^{n-1}
e^{-s}\,\left|(v+t-st/n)e^{-\tau t(1-s/n)}-v \right| ds\,dv\\
=&\frac{1}{\tau^2}\int_0^n s^{n-1} e^{-s} \int_0^\infty e^{- v}
\left|(v+w)e^{-w}-v \right| dv\,ds\\
+&\frac{1}{\tau^2}\int_n^\infty s^{n-1} e^{-s} \int_w^\infty e^{-
v}
\left|(v-w)e^w -v\right| dv\,ds\\
=&\frac{1}{\tau^2 }\int_0^\infty s^{n-1} e^{-s}\int_0^\infty
e^{-v} |(1-e^{-w})v+we^{-w} |\,dv\,ds,
\end{align*}
and therefore
\begin{equation}
Q_{n,t}^{(2)}(\tau) \le \frac{1}{\tau^2}\int_0^\infty s^{n-1}
e^{-s}[(1-e^{-w})+we^{-w}]\,ds. \label{r00}
\end{equation}
Then, by (\ref{L1lem}) and (\ref{r00}),
\begin{equation}
Q_{n,t}(\tau)=\frac{Q^{(1)}_{n,t}(\tau)+Q_{n,t}^{(2)}(\tau)}{(n-1)!}
\le \frac{2}{(n-1)!{\tau^2}}\int_0^\infty s^{n-1} e^{-s}
[1-e^{-w}]\,ds \le \frac{2}{\tau^2}. \label{zerot}
\end{equation}

Now let us prove that
\begin{equation}
Q_{n,t}(\tau)\le \frac{3 t^2}{n},\;\;t\ge 0,\;\;n\in \mathbb
N,\;\;\tau>0. \label{form}
\end{equation}
Define
\[
q_{n,t}(v,s,\tau):=\left|v[e^{-\tau t(1-s/n)}-1+\tau t(1-s/n)] +
t(1-s/n)(e^{-\tau t(1-s/n)}-1)\right|.
\]
Then using  \eqref{mod0} we have
\begin{align*}
Q_{n,t}^{(2)}(\tau) \le& \int_0^\infty e^{-\tau v}
\int_0^{n(v/t+1)} s^{n-1} e^{-s}\,q_{n,t}(v,s,\tau)\,ds\,dv
\\
+&t\int_0^\infty e^{-\tau v}|1-\tau v|\int_{n(v/t+1)}^\infty
s^{n-1}
e^{-s}\,(s/n-1) ds\, dv\\
=&\frac{1}{\tau^2}\int_0^n s^{n-1} e^{-s}\int_0^\infty  e^{-v}
\tau q_{n,t}(v/\tau,s,\tau)\,dv\,ds\\
+&\frac{1}{\tau^2}\int_n^\infty s^{n-1} e^{-s}\int_w^\infty
e^{-v}
\tau q_{n,t}(v/\tau,s,\tau)\,dv\,ds\\
+&\frac{1}{\tau^2}\int_n^\infty  s^{n-1} e^{-s} wu(w)\,ds,
\end{align*}
where
\[
u(w):=\int_0^w |1-v|e^{-v}\,d v\le 1-e^{-w}.
\]

For $s\le n$ we have
\begin{align*}
\tau q_{n,t}(v/\tau,s,\tau)
=&\left|v[e^{-w}-1+w]+ w(e^{-w}-1)\right|\\
\le & v(e^{-w}-1+w)+w(1-e^{-w}),
\end{align*}
and similarly if  $s\ge n$ then
\begin{align*}
\tau q_{n,t}(v/\tau,s,\tau)
=&\left|v[e^{w}-1-w]-w(e^{w}-1)\right|\\
\le& v(e^{w}-1-w)+w(e^w-1).
\end{align*}
So,
\begin{align}
&\tau ^2 Q_{n,t}^{(2)}(\tau)\label{r20}\\
\le& \int_0^n s^{n-1} e^{-s} \int_0^\infty e^{-v}(v[e^{-w}-1+w]
+ w(1-e^{-w}))\,dv\,ds \notag\\
+&\int_n^\infty s^{n-1} e^{-s}\int_w^\infty e^{-v}
\left|v[e^{w}-1-w]+w(e^{w}-1)\right|\,dv\,ds\notag\\
+&\int_n^\infty  s^{n-1} e^{-s} w (1-e^{-w})\,ds\notag\\
=& \int_0^n s^{n-1} e^{-s}[2w-1+(1-w)e^{-w}]\,ds\notag \\
+&\int_n^\infty s^{n-1} e^{-s} [1+3w-(1+4w+w^2)e^{-w}] \,ds.\notag
\end{align}

Then, by (\ref{L1lem}) and (\ref{r20}), we infer that
\begin{align*}
\tau^2 (Q^{(1)}_{n,t}(\tau)+Q^{(2)}_{n,t}(\tau))
\le& 2\int_0^n s^{n-1}e^{-s} w(1-e^{-w})\,ds\\
+&\int_n^\infty s^{n-1} e^{-s}[2+3w-(w^2 +5w+2)e^{-w}]\,ds.
\end{align*}
Using now elementary inequalities
\[
w(1-e^{-w})\le w^2,\;\;\; 2+3w-(w^2+5w+2)e^{-w}\le 3w^2,\;\;\;w\ge
0,
\]
where $w^2=\tau^2 t^2(1-s/n)^2$ (see \eqref{wW}), we obtain that
\begin{equation}\label{second}
Q_{n,t}(\tau) \le \frac{3t^2}{(n-1)!}\int_0^\infty s^{n-1}
(1-s/n)^2\,ds= \frac{3t^2}{n},
\end{equation}
i.e. \eqref{form} holds.

Hence, from (\ref{zerot}), (\ref{second}) and the inequality
\[
\min\left\{\frac{1}{a^2},\frac{1}{b^2}\right\}\le
\frac{4}{(a+b)^2},\;\;\;a,b>0,
\]
it follows that
\[
Q_{n,t}(\tau)\le
3\min\left\{\frac{1}{\tau^2},\frac{t^2}{n}\right\} \le
\frac{12}{(\sqrt{n}/t+\tau)^2}, \qquad n \in \mathbb N, \,\, \tau
\ge 0, \,\, t >0.
\]

\end{document}